\documentclass[11pt]{amsart}
\usepackage{mathrsfs}
\usepackage{amsmath}
\usepackage{amsfonts}
\usepackage{amssymb}
\usepackage{amsxtra}
\usepackage{mathtools}
\usepackage{dsfont}

\usepackage{graphicx}
\usepackage[T1]{fontenc}
\usepackage{newtxmath}
\usepackage{newtxtext}
\usepackage[margin=2cm, centering]{geometry}
\usepackage{enumitem}

\usepackage[colorlinks,citecolor=blue,urlcolor=blue,bookmarks=true]{hyperref}
\hypersetup{
pdfpagemode=UseNone,
pdfstartview=FitH,
pdfdisplaydoctitle=true,
pdfborder={0 0 0}, 
pdftitle={Variational estimates for operators over some thin subsets of primes},
pdfauthor={Bartosz Trojan},
pdflang=en-US
}

\usepackage[english,polish]{babel}
\newcommand{\pl}[1]{\foreignlanguage{polish}{#1}}

\newtheorem{theorem}{Theorem}
\newtheorem{proposition}{Proposition}[section]
\newtheorem{lemma}{Lemma}

\newcounter{thm}

\newtheorem{main_theorem}[thm]{Theorem}

\theoremstyle{definition}

\newtheorem{definition}{Definition}

\newcommand{\RR}{\mathbb{R}}
\newcommand{\ZZ}{\mathbb{Z}}

\newcommand{\NN}{\mathbb{N}}

\newcommand{\PP}{\mathbb{P}}

\newcommand{\bfP}{\mathbf P}

\newcommand{\scrA}{\mathscr{A}}

\newcommand{\scrH}{\mathscr{H}}

\renewcommand{\atop}[2]{\substack{{#1}\\{#2}}}
\newcommand{\norm}[1]{{\left\lvert #1 \right\rvert}}

\newcommand{\abs}[1]{{\lvert {#1} \rvert}}

\newcommand{\calA}{\mathcal{A}}
\newcommand{\calC}{\mathcal{C}}

\newcommand{\calB}{\mathcal{B}}
\newcommand{\calM}{\mathcal{M}}

\newcommand{\calR}{\mathcal{R}}

\newcommand{\calL}{\mathcal{L}}
\newcommand{\calO}{\mathcal{O}}
\newcommand{\calJ}{\mathcal{J}}
\newcommand{\calH}{\mathcal{H}}

\newcommand{\vphi}{\varphi}

\author{Bartosz Trojan}
\address{
	Bartosz Trojan\\
	Instytut Matematyczny
	Polskiej Akademii Nauk\\
	ul. \pl{{\'S}niadeckich} 8\\
	00-656 Warszawa\\
	Poland}
\email{btrojan@impan.pl}

\title[Variational estimates]
{Variational estimates for operators over some thin subsets of primes}

\begin{document}
\selectlanguage{english}

\begin{abstract}
	We establish $\ell^p(\ZZ)$ boundedness of $r$-variational seminorm for operators of Radon type along subsets
	of prime numbers of the form $\big\{p \in \PP : \{ \varphi_1(p)\} < \psi(p)\big\}$. As an application we obtain the
	corresponding pointwise ergodic theorems.
\end{abstract}

\maketitle

\section{Introduction}
Given a dynamical system $(X, \calB, \mu, T)$, that is a $\sigma$-finite measure space $(X, \calB, \mu)$ with 
an invertible measure preserving transformation $T : X \rightarrow X$, and any polynomial $P: \ZZ \rightarrow \ZZ$ of
degree $d \geq 1$ having integer coefficients and without a constant term, we are interested in the pointwise convergence
for $f \in L^s(X, \mu)$, $s > 1$, of the averages
\[
	\scrA_N f(x) = \frac{1}{\abs{\bfP \cap [1, N]}} \sum_{p \in \bfP \cap [1, N]} f \left(T^{P(p)}x\right)
\]
where $\bfP$ is a thin subset of prime numbers $\PP$, i.e. a subset of $\PP$ such that
\[
	\lim_{N \to \infty} \frac{|\bfP \cap [1, N]|}{|\PP \cap [1, N]|} = 0.
\]
Our principle example is the set
\[
	\bfP = \big\{p \in \PP : p = \lfloor h(n) \rfloor \text{ for some } n \in \NN \big\}
\]
where $h$ is a regularly-varying function of index $c \in [1, 2)$, for instance
$x^c \log^A(x)$ for some $A > 0$. In this context, we also study pointwise convergence of the truncated discrete Hilbert
transform with an appropriate weight function $\omega$,
\[
	\scrH_N f(x) = \sum_{p \in \pm \bfP \cap [1, N]} f\left(T^{P(p)} x\right) \frac{\omega(\abs{p})}{p}.
\]
The problem we are interested in may be stated as follows: for a subset $\mathbf{A} \subseteq \NN$, $s \geq 1$
and any polynomial $P$ having integer coefficients and without a constant term, determine whether for any function
$f \in L^s(X, \mu)$, the limit
\begin{equation}
	\label{eq:1}
	\lim_{N \to \infty} \frac{1}{|\mathbf A \cap [1, N] |} 
	\sum_{n \in \mathbf A \cap [1, N]} f\left(T^{P(n)} x \right)
\end{equation}
exists for $\mu$-almost all $x$. 

Pointwise convergence of ergodic averages was initially observed by Birkhoff in
\cite{birk} where the author considered $\mathbf A = \NN$, $P(n) = n$ and $s \geq 1$. The higher degree polynomials
required a new approach discovered by Bourgain in 80's. In the series of papers, \cite{bou1, bou2, bou}, Bourgain proved
the pointwise convergence for $\mathbf A=\NN$, any polynomial $P$ having integer coefficients, and $s > 1$.
The restriction to the range $s > 1$, in Bourgain's theorem is essential. In fact, Buczolich and Mauldin \cite{BM},
and LaVictoire \cite{LaV} showed that in the case of $P(n) = n^k$, $k \geq 2$, the pointwise convergence \eqref{eq:1}
for a function in $L^1(X,\mu)$ may fail on a large set.

Considering averages over prime numbers, in \cite{bou-p} Bourgain proved their pointwise convergence for $P(n) = n$ and
functions in $L^2(X, \mu)$. Later, in \cite{wrl}, Wierdl extended this result to all $s > 1$,
(see also \cite[Section 9]{bou}). Again the restriction $s > 1$, is essential as LaVictoire showed in \cite{LaV}.
The case of higher degree polynomials, at least for functions in $L^2(X, \mu)$, was investigated by Nair in \cite{na1}.
In \cite{na2}, Nair also studied $s > 1$ but his proof of Lemma 14 contains an error. The general case $s > 1$,
I have covered in the recent paper \cite{tr1}. Finally, a subclass of thin subsets of primes discussed in this article
were previously studied by Mirek in \cite{m2}.

The initial study of pointwise convergence for the truncated discrete Hilbert transform goes back to Cotlar \cite{cot},
where $\mathbf A = \NN$ and $P(n) = n$ was considered. The case with a general polynomial $P$ was a more delicate issue
recently resolved by Mirek, Stein and myself in \cite{mst2}. On the other hand, the truncated discrete Hilbert transform
along prime numbers was the subject of the article written by Mirek and myself \cite{mt2},
see also \cite{mtz}. Ultimately, the general polynomials I have considered in the recent paper \cite{tr1}.

Classical proofs of pointwise convergence proceeds in two steps: The first, is to establish the convergence for a
class of functions dense in $L^p(X, \mu)$. To extend the result to all functions, one needs $L^p$-boundedness of the
corresponding maximal function. Nevertheless, finding the dense class may be a difficult task. To overcome this,
one can show the $r$-variational estimates, see Theorem \ref{thm:1} and Theorem \ref{thm:2} for details.
This approach to study discrete operators has already been used in many papers, see 
\cite{cjrw, jkrw, K, mst2, mt3, mtz, tr1, zk}.

Before stating the results, let us define thin subsets of $\PP$ we are interested in. 
\begin{definition}
	\label{def:1}
	Let $\mathcal L$ be a family of slowly varying functions
	$L: [1, \infty) \rightarrow (0, \infty)$  such that
	\begin{align*}
		L(x)=\exp\Big(\int_1 ^x \frac{\vartheta (t)}{t} {\: \rm d} t \Big)
	\end{align*}
	where $\vartheta\in \calC^{\infty}([1, \infty))$ is a real function satisfying
	\[
		\lim_{x\to\infty}
		\vartheta(x)=0, \qquad\text{and}\qquad
	 	\lim_{x\to\infty}
		x^n\vartheta^{(n)}(x)=0,
		\qquad\text{for every } n\in\NN.
	\]
\end{definition}
Let us distinguish a subfamily $\calL_0$ of $\calL$.
\begin{definition}
	\label{def:2}
	Let $\calL_0$ be a family of  slowly varying functions
	$L:[1, \infty) \rightarrow (0, \infty)$ such that $\lim_{x\to\infty}L(x)=\infty$ and
	\begin{align*}
		L(x)=\exp\Big(\int_1 ^x\frac{\vartheta (t)}{t} {\: \rm d} t\Big)
	\end{align*}
	where  $\vartheta \in \calC^{\infty}([1, \infty))$ is positive decreasing real function satisfying
	\begin{align*}
		\lim_{x\to\infty} \vartheta(x)=0,\qquad\text{and}\qquad
		\lim_{x\to\infty} \frac{x^n\vartheta^{(n)}(x)}{\vartheta(x)}=0,
		\qquad\text{for every } n\in\NN,
	\end{align*}
	and for every $\epsilon>0$ there is a constant $C_{\epsilon}>0$ such that
	$1 \le C_{\epsilon}\vartheta(x)x^{\epsilon}$.
\end{definition}
Lastly, we define the subfamily $\calR_c$ of regularly varying functions.
\begin{definition}
	\label{def:3}
	For every $c \in(0, 2)$, let $\mathcal R_c$ be a family of increasing convex regularly-varying functions
	$h:[1, \infty) \rightarrow [1, \infty)$ of the form
	\[
		h(x)=x^c L(x),
	\]
	where $L \in \mathcal L_0$, if $c = 1$, and $L \in \mathcal L$ otherwise.
\end{definition}
Let us fix two functions $h_1 \in \calR_{c_1}$ and $h_2 \in \calR_{c_2}$ for $c_1, c_2 \in [1, 2)$.
In the whole article it is assumed that $\gamma_1 = 1/c_1$ and $\gamma_2 = 1/c_2$ satisfy
\begin{enumerate}
	\item if $d = 1$,
	\[
		\left\{
		\begin{aligned}
		&(1-\gamma_1) &+ 15 &(1-\gamma_2) &< 1, \\
		3&(1-\gamma_1) &+ 12 &(1-\gamma_2) &< 2,
		\end{aligned}
		\right.
	\]
	\item if $d = 2$,
	\[
		\left\{
		\begin{aligned}
		3&(1-\gamma_1) &+ 62&(1-\gamma_2) &< 3, \\
		4&(1-\gamma_1) &+ 32&(1-\gamma_2) &< 3,
		\end{aligned}
		\right.
	\]
	\item if $d \in \{3, \ldots, 9\}$,
	\[
		\left\{
		\begin{aligned}
		\frac{1}{3 \cdot 2^d} (1-\gamma_1) + \bigg(1+\frac{1}{6(2^d-1)}\bigg) &(1- \gamma_2)
		&< \frac{1}{3 \cdot 2^d}, \\
		&(1-\gamma_2) &< \frac{1}{4\cdot 2^d},
		\end{aligned}
		\right.
	\]
	\item if $d \geq 10$,
	\[
		\frac{2}{3d(d+1)^2} (1 -\gamma_1) + \bigg(1+\frac{1}{3d(d+1)}\bigg)(1-\gamma_2) 
		< \frac{2}{3d(d+1)^2}.
	\]
\end{enumerate}
Let $\varphi_1$ and $\varphi_2$ be the inverse of $h_1$ and $h_2$, respectively. By \cite[Lemma 2.20]{m2}, if $c_j = 1$
then there is a positive real decreasing function $\sigma_j$ satisfying $\sigma_j(2x) \simeq \sigma_j(x)$ and
$\sigma_j(x) \gtrsim x^{-\epsilon}$ for any $\epsilon > 0$, such that for each $k \in \NN$,
\footnote{We write $A \lesssim B$ if there is an absolute constant $C>0$ such that $A \le CB$.
If $A \lesssim B$ and $B \lesssim A$ hold simultaneously then we write $A \simeq B$.} 
\begin{equation}
	\label{eq:28}
	\varphi_j^{(k)} (x) \simeq \frac{\varphi_j(x) \sigma_j(x)}{x^k}.
\end{equation}
We set $\sigma_j \equiv 1$ whenever $c_j > 1$. In this article, we are interested in sets of the form
\[
	\bfP_+=\big\{p \in \PP: \{\varphi_1(p)\}<\psi(p) \big\},
	\qquad \text{and} \qquad
	\bfP_-=\big\{p \in \PP: \{-\varphi_1(p)\}<\psi(p) \big\},
\]
where $\psi: [1, \infty) \rightarrow (0, \infty)$ is a positive function such that 
$\psi(x) \leq \tfrac{1}{2}$ for all $x \geq 1$, and
\begin{align}
	\label{eq:5}
	\lim_{x \to +\infty} \frac{\psi^{(k)}(x)}{\varphi_2^{(k+1)}(x)} = 1,
\end{align}
for $k = 0, \ldots, d+2$, where $d \geq 1$ is the degree of the polynomial $P$. The sets $\bfP_-$ and $\bfP_+$ are
intersections with primes numbers of sets studied in \cite{kmt0}.

Let us observe that, if $h_1=h_2=h$ is the inverse function to $\varphi$ and $\psi(x)=\varphi(x+1)-\varphi(x)$ then
\[
	\bfP_-=\left\{p \in \PP : p = \lfloor h(n)\rfloor \text{ for some } n \in \NN \right\}.
\]
Indeed, we have the following chain of equivalences
\begin{align*}
	\PP \ni p = \lfloor h(n) \rfloor \ \text{ for some } n \in \NN 
	& \iff h(n)-1 < p \leq h(n) < p+1 \\
	& \iff \varphi(p) \leq n < \varphi(p+1), \; \; \text{ since $\varphi$ is increasing} \\
	& \iff 0 \leq n - \varphi(p) < \varphi(p+1) - \varphi(p) = \psi(p) \leq \tfrac{1}{2} \\
	& \iff 0 \leq \{ - \varphi(p) \} < \psi(p) \\
	& \iff p \in \bfP_-.
\end{align*}
In particular, the sets $\bfP_-$ are a generalization of those considered by Leitmann \cite{leit} and Mirek
\cite{m2}. 

For any $r\ge1$, the $r$-variational seminorm $V_r$ of a sequence $\big(a_n: n\in \NN\big)$ of complex numbers is defined
by
\[
	V_r\big(a_n : n\in \NN\big) 
	=
	\sup_{k_0<\ldots <k_J}
	\Big(\sum_{j=1}^J|a_{k_j}-a_{k_{j-1}}|^r\Big)^{1/r}.
\]
Observe that, if $V_r(a_ n : n \in \NN) < \infty$ for any $r \geq 1$, then the sequence $(a_n : n \in \NN)$
convergences. Therefore, we can deduce the pointwise ergodic theorems from the following two statements.
\begin{main_theorem}
\label{thm:1}
	Let $\bfP \in \big\{\bfP_-, \bfP_+\big\}$. For every $s > 1$ there is $C_s > 0$ such that for all $r>2$
	and any $f \in L^s(X, \mu)$,
	\[
		\big\lVert
		V_r\big(  \scrA_N f: N\in\NN\big)
		\big\rVert_{L^s}
		\le
		C_s \frac{r}{r-2} \|f\|_{L^s}.
	\]
	Moreover, the constant $C_s$ is independent of coefficients of the polynomial $P$.
\end{main_theorem}
\begin{main_theorem}
	\label{thm:2}
	Let $\bfP \in \big\{\bfP_-, \bfP_+\big\}$. For every $s>1$ there is $C_s > 0$ such that for all $r>2$ and 
	any $f \in L^s(X, \mu)$,
	\[
		\big\lVert
		V_r\big(\scrH_N f: N\in\NN\big) \big\rVert_{L^s}\le
		C_s \frac{r}{r-2}\|f\|_{L^s}.
	\]
	Moreover, the constant $C_s$ is independent of coefficients of the polynomial $P$.
\end{main_theorem}
We point out that Theorem \ref{thm:2} allows us to define ergodic counterpart of the singular integral operator. Namely,
for $f \in L^s(X, \mu)$, $s > 1$, we set
\[
	\scrH f(x) = \lim_{N \to \infty} \scrH_N f(x),
\]
for $\mu$-almost all $x \in X$. 

In view of the Calder\'on transference principle while proving Theorem \ref{thm:1} and Theorem \ref{thm:2} we may assume
that we deal with the model dynamical system, namely, the integers $\ZZ$ with the counting measure and the shirt operator.
As usual, $r$-variations are divided into to two parts: short and long variations. By choosing long
variations to be over the set $Z_\rho = \big\{\lfloor 2^{k^\rho} \rfloor : k \in \NN \big\}$ for some $\rho \in (0, 1)$,
we make short variations easier to handle. Indeed, bounding short variations is reduced to estimating $\ell^1(\ZZ)$-norm
of convolution kernels, which is a consequence of the asymptotic of some exponential sums over $\bfP$ combined with
the prime number theorem or the Mertens theorem. For long variations, we replace the operators modeled on $\bfP$
by operators modeled on $\PP$. For this step, we need to establish a decay of $\ell^2$-norm of the corresponding
difference. In view of the Plancherel's theorem, it is a consequence of estimates for some exponential sums over $\bfP$,
see Section \ref{sec:1}. Lastly, variational estimates for the operators modeled on $\PP$ are proved in
\cite[Theorem C]{tr1}.

\section{Exponential sums}
\label{sec:1}
In this section we develop estimates on exponential sums that are essential to our argument. The main tools is
van der Corput's lemma in the classical form as well as the one recently obtained by Heath-Brown
(see \cite[Theorem 1]{hb2}).
\begin{lemma}[{\cite{VDC}, \cite[Theorem 5.11, Theorem 5.13]{titch}}]
	\label{lem:4}
	Suppose that $N\ge1$ and $k\geq2$ are two integers and $a\le b \le a+N$. Let $F\in\mathcal{C}^k(a, b)$ be a
	real-valued function such that
	\begin{align*}
  		\eta \lesssim |F^{(k)}(x)|\lesssim r\eta,\ \ \mbox{for all \ $x\in (a, b)$,}
	\end{align*}
	for some $\eta>0$ and $r\ge1$. Then 
	\begin{align*}
		\bigg|\sum_{a\le n\le b}
		e^{2\pi i F(n)}\bigg|\lesssim
    	N\left(\eta^{\frac{1}{2^k-2}}+N^{-\frac{2}{2^k}}+(N^k\eta)^{-\frac{2}{2^k}}\right).
	\end{align*}
	The implied constant depends only on $r$.
\end{lemma}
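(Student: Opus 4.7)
\emph{Proof proposal.} The plan is to argue by induction on $k$, following the classical scheme of iterated Weyl differencing. For the base case $k = 2$, I would invoke the Kuzmin--Landau lemma: under the hypothesis $\eta \lesssim |F''(x)| \lesssim r\eta$, partitioning $(a,b)$ into subintervals on which $F'$ is monotone and varies by at most $1$ (there are at most $N\eta + 1$ such pieces), and estimating each resulting piece by $(\min_{x} \|F'(x)\|)^{-1}$, gives the bound $\sabs{\sum_{a\le n \le b} e^{2\pi i F(n)}} \lesssim N\eta^{1/2} + \eta^{-1/2}$. This is the required bound for $k=2$, since the additional $N^{1/2}$ term present in the statement is harmless (one may freely add it).

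For the inductive step $k \geq 3$, I would apply the Weyl--van der Corput inequality: for any integer $H$ with $1 \leq H \leq N$,
\[
    \biggl|\sum_{a \leq n \leq b} e^{2\pi i F(n)}\biggr|^{2} \lesssim \frac{N^{2}}{H} + \frac{N}{H}\sum_{h=1}^{H-1}\biggl|\sum_{n \in I_{h}} e^{2\pi i (F(n+h) - F(n))}\biggr|,
\]
where $I_{h} \subseteq (a, b-h)$ has length at most $N$. Setting $F_{h}(x) = F(x+h) - F(x)$ and applying the mean value theorem $k-1$ times, there exists $\xi \in (x, x+h)$ with $F_{h}^{(k-1)}(x) = h F^{(k)}(\xi)$, so $h\eta \lesssim |F_{h}^{(k-1)}(x)| \lesssim r h \eta$. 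Hence the inductive hypothesis applies to the inner sum with $k-1$ in place of $k$ and $h\eta$ in place of $\eta$.

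Inserting the inductive estimate and summing over $1 \leq h < H$, one arrives at
\[
    \biggl|\sum_{a \leq n \leq b} e^{2\pi i F(n)}\biggr|^{2} \lesssim \frac{N^{2}}{H} + N^{2}\Bigl( H^{1/(2^{k-1}-2)}\eta^{1/(2^{k-1}-2)} + N^{-2/2^{k-1}} + H^{-2/2^{k-1}}(N^{k-1}\eta)^{-2/2^{k-1}} \Bigr).
\]
Taking square roots and balancing the first two summands leads to the choice $H \simeq \eta^{-1/(2^{k-1}-1)}$; a direct computation then shows that $N^{2}/H + N^{2} H^{1/(2^{k-1}-2)}\eta^{1/(2^{k-1}-2)} \lesssim N^{2}\eta^{1/(2^{k-1}-1)} = N^{2}\eta^{2/(2^{k}-2)}$, producing the $N\eta^{1/(2^{k}-2)}$ term after taking square roots. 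The remaining two summands, after similarly substituting the chosen $H$, contribute the $N^{1-1/2^{k-1}} \cdot 1$ and $(N^{k}\eta)^{-1/2^{k-1}}\cdot N$ terms, which are precisely the remaining two terms in the statement once one rewrites $1/2^{k-1} = 2/2^{k}$.

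The main obstacle I anticipate is the book-keeping of exponents through the induction — verifying that the three error terms consolidate into $\eta^{1/(2^{k}-2)}$, $N^{-2/2^{k}}$, and $(N^{k}\eta)^{-2/2^{k}}$ with the correct constants — together with handling the degenerate regimes in which the formally optimal $H$ fails $1 \leq H \leq N$. In those regimes, one chooses $H = 1$ or $H = N$ and verifies that whichever of the three summands then dominates is still consistent with the claimed bound, so the lemma holds without case distinction in its final form.
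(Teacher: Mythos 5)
The paper does not prove Lemma~\ref{lem:4}; it is cited from van der Corput and Titchmarsh (Theorems~5.11, 5.13), so there is no internal proof to compare against. Your proposal reproduces the route taken in those sources --- Kuzmin--Landau for the base case $k=2$, then iterated Weyl--van der Corput differencing for $k\geq 3$, with the mean value theorem carrying the derivative bound from order $k$ down to $k-1$ --- so the strategy matches the cited literature.

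There is, however, a concrete error in the bookkeeping, and it is not merely the clamping of $H$ to $[1,N]$ that your final paragraph anticipates. With your choice $H\simeq\eta^{-1/(2^{k-1}-1)}$, the fourth displayed summand (after squaring) is $N^2 H^{-2/2^{k-1}}(N^{k-1}\eta)^{-2/2^{k-1}}=N^2(HN^{k-1}\eta)^{-2/2^{k-1}}$; to reduce this to the squared target third term $N^2(N^k\eta)^{-4/2^k}=N^2(N^k\eta)^{-2/2^{k-1}}$ one would need $H\gtrsim N$, yet your $H$ satisfies $H\leq N$, and typically $H\ll N$, on the very range $\eta\gtrsim N^{-(2^{k-1}-1)}$ where that choice is admissible. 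A concrete check at $k=3$, $\eta=N^{-7/4}$, so $H=\eta^{-1/3}=N^{7/12}$: the fourth summand contributes $N^{1/2}\eta^{-1/6}=N^{19/24}$ after taking square roots, which strictly exceeds each of $N\eta^{1/6}=N^{17/24}$, $N^{3/4}=N^{18/24}$, and $N(N^3\eta)^{-1/4}=N^{11/16}$. The claimed bound is nonetheless true, but obtaining it forces a different balance of summands on this range: for $k=3$ and $\eta\in(N^{-2},N^{-3/2})$, the choice $H\simeq N^{-1}\eta^{-1}$ balances the second and fourth summands and yields the middle term $N^{3/4}$. In short, the case analysis on the size of $\eta$ relative to $N$ is a load-bearing part of the proof --- not an endpoint adjustment for $H$ falling out of $[1,N]$ --- and your sketch omits it.
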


\begin{lemma}[\cite{hb2}]
	\label{lem:3}
	Suppose that $N \geq 1$ and $k \geq 3$ are two integers and $a \leq b \leq a + N$. Let $F \in \calC^k(a, b)$ be a
	real-valued function such that
	\[
		\eta \lesssim \abs{F^{(k)}(x)} \lesssim r \eta,
		\quad\text{for all } x \in (a, b),
	\]
	for some $\eta > 0$ and $r \geq 1$. Then for every $\epsilon > 0$,
	\[
		\bigg|\sum_{a \leq n \leq b} e^{2\pi i F(n)} \bigg|
		\lesssim
		N^{1+\epsilon}
		\Big(\eta^{\frac{1}{k(k-1)}} + N^{-\frac{1}{k(k-1)}} + (N^k \eta)^{-\frac{2}{k^2(k-1)}}\Big),
	\]
	where the implied constant depends only on $r$, $k$ and $\epsilon$.
\end{lemma}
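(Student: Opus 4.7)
The plan is to follow Heath-Brown's strategy, which couples Taylor approximation with Vinogradov's mean value theorem (in its now-sharp form, proven via $\ell^2$-decoupling for the moment curve). First I would partition $[a,b]$ into $O(N/H)$ consecutive subintervals of length at most $H$, for a parameter $H$ to be optimized. On each such subinterval, based at a point $x_0$, Taylor's theorem together with the hypothesis $\eta \lesssim |F^{(k)}(x)| \lesssim r\eta$ yields
\[
F(x_0+m) = \sum_{j=0}^{k-1}\frac{F^{(j)}(x_0)}{j!}\, m^j + R(m), \qquad |R(m)| \lesssim \eta H^k.
\]
Choosing $H$ so that $\eta H^k \lesssim 1$ makes the remainder contribute only a harmless factor $1 + O(\eta H^k)$, and the contribution of each subinterval reduces to a pure polynomial exponential sum
\[
S(x_0) = \sum_{0 \le m < H} e^{2\pi i P_{x_0}(m)}, \qquad P_{x_0}(m) = \sum_{j=1}^{k-1} \frac{F^{(j)}(x_0)}{j!}\, m^j.
\]

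Second, I would bound $S(x_0)$ pointwise via Vinogradov's mean value theorem. For $s \ge k(k-1)/2$ one has the sharp bound
\[
\int_{[0,1]^{k-1}} \Big| \sum_{m=1}^H e^{2\pi i(\alpha_1 m + \cdots + \alpha_{k-1} m^{k-1})} \Big|^{2s}\, d\alpha_1 \cdots d\alpha_{k-1} \lesssim H^{2s - k(k-1)/2 + \epsilon}.
\]
Converting this mean bound to a pointwise estimate for $S(x_0)$ goes through the standard Weyl/large sieve dichotomy on the leading coefficient $\alpha_{k-1}(x_0) = F^{(k-1)}(x_0)/(k-1)!$: either $\alpha_{k-1}(x_0)$ admits a rational approximation $a/q$ with small $q$, in which case classical Weyl-type inequalities apply directly, or it is generic and the mean value estimate forces cancellation. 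In either case one obtains a pointwise bound of the form $H^{1+\epsilon}$ times a saving that is a polynomial in $\eta$, $H$, and $N^k \eta$.

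Third, summing the resulting estimate over the $O(N/H)$ base points and optimizing $H$ against the three competing regimes yields precisely the three-term bound of the statement. The exponent $1/(k(k-1))$ is the saving per unit length delivered by the $2s = k(k-1)$ Vinogradov mean value estimate; the term $N^{-1/(k(k-1))}$ is the trivial short-sum obstruction; and $(N^k\eta)^{-2/(k^2(k-1))}$ is the Weyl exponent arising from the extraction step for the leading coefficient in the intermediate regime where neither other term dominates.

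The main obstacle, which is the technical core of Heath-Brown's argument, is the uniform passage from the Vinogradov mean value bound to a pointwise estimate valid across the whole family of base points $x_0$, with control on how the Diophantine behaviour of $\alpha_{k-1}(x_0)$ varies with $x_0$. A naive application would introduce coefficient-dependent constants or parasitic logarithms that spoil the clean exponents $1/(k(k-1))$ and $2/(k^2(k-1))$; getting the optimal shape requires the careful interplay between the large sieve, the mean value bound, and the choice of $H$ sketched above.
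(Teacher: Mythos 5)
The paper does not give a proof of this lemma: it is stated as a quotation of Heath-Brown's Theorem~1 in \cite{hb2} (with $\lambda$ replaced by $\eta$ and $A$ by $r$, and the third term written as $(N^k\eta)^{-2/(k^2(k-1))}$, which equals $N^{-2/(k(k-1))}\eta^{-2/(k^2(k-1))}$), and is used as a black box throughout Section~\ref{sec:1}. So there is no in-paper argument to compare yours against, only a citation.

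As a reconstruction of Heath-Brown's argument, your outline is broadly faithful: partition into intervals of length $H$, Taylor-approximate to degree $k-1$ with remainder $\ll \eta H^k$, reduce to complete degree-$(k-1)$ polynomial exponential sums, invoke the sharp Vinogradov mean value bound (critical exponent $2s=k(k-1)$), exploit that the top Taylor coefficient $\alpha_{k-1}(x_0)=F^{(k-1)}(x_0)/(k-1)!$ drifts at speed $\asymp\eta$ as $x_0$ moves, and optimize in $H$ to obtain the three competing terms. Two details are off, though. First, Heath-Brown does not run a Weyl/major-minor-arc dichotomy on $\alpha_{k-1}(x_0)$; the passage from the mean value bound to the sum over the $O(N/H)$ base points is a direct Gallagher/spacing argument for the leading coefficient, with no case split according to rational approximations of $\alpha_{k-1}$. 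Second, one does not simply fix a single $H$ with $\eta H^k\lesssim 1$; the three terms in the conclusion come from balancing several constraints on $H$ (one needs $H\le N$, the Taylor error small, and the spacing of the leading coefficient nontrivial), and different terms dominate in different ranges of $\eta$ relative to $N$. Since the lemma is merely cited in the paper, the only thing the paper actually needs here is the correct attribution, which it has.
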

Notice that the exponents in Lemma \ref{lem:3} are improved for $n \geq 10$. In fact, the second term in the
bracket has smaller exponent in Lemma \ref{lem:4} for $2 \leq n \leq 5$, while the third term for $2 \leq n \leq 9$.
To benefit from this observation, we take the minimum of both estimates.

We start by investigating some exponential sums over integers in arithmetic progression.
\begin{proposition}
	\label{prop:2}
	For $m \in \ZZ \setminus \{0\}$, $\tau \in \{0, 1\}$ and $j \geq 1$, we set
	\[
		T(K) = \sum_{1 \leq k \leq K} \exp\Big(2\pi i \big(\xi P(j k) + m(\varphi_1(j k) - \tau \psi(j k))\big)\Big).
	\]
	Then
	\footnote{We write $A \lesssim_\delta B$ to indicate that the implied constant depends on $\delta$.}
	\begin{enumerate}[leftmargin=*]
	\item if $d \geq 1$ then for each $\epsilon > 0$,
	\[
		\big| T(K) \big| 
		\lesssim_\epsilon 
		\abs{m}^\frac{1}{2(2^d-1)} (j K)^{1+\epsilon} \big(\varphi_1(j K) \sigma_1(j K) \big)^{-\frac{1}{2^d}}, 
	\]
	\item if $d \geq 2$ then for each $\epsilon > 0$,
	\[
		\big| T(K) \big|
		\lesssim_\epsilon \abs{m}^\frac{1}{d(d+1)} (j K)^{1+\epsilon}
		\big(\varphi_1(j K) \sigma_1(j K)\big)^{-\frac{2}{d(d+1)^2}}.
	\]
	\end{enumerate}
	The implied constants are independent of $j$, $m$, $\tau$, $K$ and $\xi$.
\end{proposition}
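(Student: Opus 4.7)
The plan is to apply the van der Corput estimates of Lemma~\ref{lem:4} and Lemma~\ref{lem:3} to the phase
\[
F(k) = \xi P(jk) + m\bigl(\varphi_1(jk) - \tau \psi(jk)\bigr),
\]
differentiating $d+1$ times so that the polynomial $\xi P(jk)$ is annihilated and the resulting bound becomes uniform in $\xi$, as the proposition demands. By \eqref{eq:28}, $\varphi_1^{(d+1)}(x) \simeq \varphi_1(x)\sigma_1(x)/x^{d+1}$; by \eqref{eq:5} together with \eqref{eq:28} at order $d+2$, $\psi^{(d+1)}(x) \simeq \varphi_2(x)\sigma_2(x)/x^{d+2}$. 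Since $c_1, c_2 \in [1,2)$ forces $\gamma_1, \gamma_2 \in (1/2, 1]$, the ratio $\psi^{(d+1)}/\varphi_1^{(d+1)} \simeq \varphi_2\sigma_2/(x\,\varphi_1\sigma_1)$ tends to zero, so the $\psi$-contribution is of strictly smaller order and
\[
|F^{(d+1)}(k)| \simeq |m|\,\frac{\varphi_1(jk)\sigma_1(jk)}{k^{d+1}}.
\]

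This quantity is not of constant size on $[1,K]$, so I perform a dyadic decomposition $T(K) = \sum_{s=0}^{\lceil \log_2 K \rceil} T_s$, with $T_s$ collecting indices $k \in [2^s, 2^{s+1}) \cap [1,K]$. On each block $|F^{(d+1)}|$ is comparable to a single constant
\[
\eta_s \simeq |m|\,\frac{\varphi_1(j 2^s)\sigma_1(j 2^s)}{2^{s(d+1)}},
\]
so both van der Corput lemmas apply with a bounded ratio.

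For (i), Lemma~\ref{lem:4} with $k = d+1$ yields
\[
|T_s| \lesssim 2^s\Bigl(\eta_s^{1/(2(2^d-1))} + 2^{-s/2^d} + (2^{s(d+1)}\eta_s)^{-1/2^d}\Bigr),
\]
after noting $1/(2^{d+1}-2) = 1/(2(2^d-1))$ and $2/2^{d+1} = 1/2^d$. The third term is dominant after summation and produces roughly $K(|m|\varphi_1(jK)\sigma_1(jK))^{-1/2^d}$; since $|m|\ge 1$, the factor $|m|^{-1/2^d}$ can be replaced by $|m|^{1/(2(2^d-1))}$ at no cost. The first and second terms fit into the same target bound once one exploits $\varphi_1(x)\sigma_1(x) \lesssim x^{1+\epsilon}$ to convert the positive power of $\varphi_1\sigma_1$ into a power of $jK$, and absorbs the $O(\log K)$ loss from summing dyadic pieces into the factor $(jK)^\epsilon$.

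For (ii) I repeat the argument with Lemma~\ref{lem:3} in place of Lemma~\ref{lem:4}, still at order $d+1$: the three exponents become $1/(d(d+1))$, $1/(d(d+1))$, and $2/(d(d+1)^2)$, and the same bookkeeping delivers the second bound. The main subtlety throughout is the asymptotic comparison showing that $\psi^{(d+1)}$ is genuinely dominated by $\varphi_1^{(d+1)}$, which pins down $|F^{(d+1)}|$ uniformly in $\xi$ and $\tau$ on each dyadic block; once this is in place the remainder is a careful but routine application of the two van der Corput lemmas.
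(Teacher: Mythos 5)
Your proposal is correct and follows essentially the same route as the paper: differentiate $d+1$ times so the polynomial drops out, use \eqref{eq:28} and \eqref{eq:5} together with $\gamma_2 \leq 1 \leq 1+\gamma_1$ to show the $\psi$-contribution is subordinate and pin down $|F^{(d+1)}|$, localize to dyadic blocks (your sum $\sum_s T_s$ is the same as the paper's $\log K \cdot \max_{X'\le 2X}|T(X,X')|$ up to the log you absorb into $(jK)^\epsilon$), and apply Lemma~\ref{lem:4} for (i) and Lemma~\ref{lem:3} for (ii) with the identical exponent bookkeeping, using $\varphi_1\sigma_1(x)\lesssim x^{1+\epsilon}$ and $|m|\ge1$ to fold the three terms into the stated bound. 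The only thing worth making explicit (the paper cites \cite[Lemma 2.6]{m3}) is that $x\mapsto x(\varphi_1(x)\sigma_1(x))^{-1/2^d}$ is increasing — this is what lets you say the third term's dyadic sum is dominated by the endpoint $s\approx\log_2 K$ — but since $\gamma_1/2^d<1$ this is elementary and your argument implicitly relies on it in the right way.
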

\begin{proof}
	For the proof, let us define $F: [1, \infty) \rightarrow \RR$ by
	\[
		F(t) = \xi P(j t) + m \big(\varphi_1(j t) - \tau \psi(j t)\big).
	\]
	By \eqref{eq:28} and \eqref{eq:5}, 
	\[
		\psi^{(d+1)}(x) \simeq \varphi_2^{(d+2)}(x) \simeq 
		\frac{\varphi_2(x)\sigma_2(x)}{x^{d+2}},
	\]
	and since $\gamma_2 \leq 1 \leq 1 + \gamma_1$, we have
	\[
		\frac{ \varphi_2(x) \sigma_2(x) }{ x \varphi_1(x) \sigma_1(x) } = o(1),
	\]
	thus
	\[
		\psi^{(d+1)}(x) = o\bigg(\frac{\varphi_1(x) \sigma_1(x)}{x^{d+1}}\bigg).
	\]
	Hence, by \eqref{eq:28}, for $t \in [X, 2X]$, we obtain
	\[
		\big| F^{(d+1)}(t)\big| = j^{d+1} \abs{m} \cdot \big| \varphi_1^{(d+1)}(j t) - \tau \psi^{(d+1)}(j t) \big|
		\simeq
		\frac{j^{d+1} \abs{m} \varphi_1(j X) \sigma_1(j X)}{(j X)^{d+1}}.
	\]
	For $X < X' \leq 2X$, we set
	\[
		T(X, X') = \sum_{X < k \leq X'} e^{2\pi i F(k)}.
	\]
	Then
	\begin{equation}
		\label{eq:53}
		\big| T(K) \big| \lesssim (\log K) \max_{\atop{X < X' \leq K}{X' \leq 2X}} \big|T(X, X')\big|.
	\end{equation}
	Since for each satisfying $\delta < \gamma_1^{-1}$ and $\delta \leq 1$ if $\gamma_1 = 1$, a function
	$x \mapsto x (\varphi_1(x) \sigma_1(x))^{-\delta}$ is increasing, see \cite[Lemma 2.6]{m3}, by Lemma \ref{lem:4} and
	Lemma \ref{lem:3}, we obtain respectively
	\begin{align*}
		\big| T(X, X') \big| &\lesssim 
		X \bigg(\frac{j^{d+1} \abs{m} \varphi_1(j X) \sigma_1(j X)}{(j X)^{d+1}}\bigg)^{\frac{1}{2(2^d-1)}}
		+ X^{1-\frac{1}{2^d}}
		+ X \bigg(X^{d+1} \frac{j^{d+1} \abs{m} \varphi_1(jX) \sigma_1(j X)}{(j X)^{d+1}}\bigg)^{-\frac{1}{2^d}} \\
		&\lesssim
		(\abs{m} j)^{\frac{1}{2(2^d-1)}} 
		X^{1 - \frac{d}{2(2^d - 1)}}
		+ X^{1-\frac{1}{2^d}}
		+ X \big(\abs{m} \varphi_1(j X) \sigma_1(j X) \big)^{-\frac{1}{2^d}} \\
		&\lesssim
		\abs{m}^{\frac{1}{2(2^d -1)}} j X \big(\varphi_1(j X) \sigma_1(j X) \big)^{-\frac{1}{2^d}},
	\end{align*}
	and
	\begin{align*}
		\big| T(X, X') \big| &\lesssim
		X^{1 + \epsilon} 
		\bigg(\frac{j^{d+1} \abs{m} \varphi_1(j X) \sigma_1(j X)}{(j X)^{d+1}}\bigg)^{\frac{1}{d(d+1)}}
		+X^{1 + \epsilon -\frac{1}{d (d+1)}} \\
		&\phantom{\lesssim}
		+X^{1 +\epsilon}\bigg(X^{d+1} \frac{j^{d+1} \abs{m} \varphi_1(j X) \sigma(j X)}{(j X)^{d+1}}\bigg)
		^{-\frac{2}{d (d+1)^2}}\\
		&\lesssim
		(\abs{m} j)^{\frac{1}{d(d+1)}}
		X^{1+\epsilon - \frac{1}{d+1}}
		+ X^{1 + \epsilon - \frac{1}{d(d+1)}}
		+ X^{1+\epsilon} \big(\abs{m} \varphi_1(j X) \sigma_1(j X) \big)^{-\frac{2}{d(d+1)^2}} \\
		&\lesssim
		\abs{m}^{\frac{1}{d(d+1)}} (j X)^{1+\epsilon} \big(\varphi_1(j X) \sigma_1(j X)\big)^{-\frac{2}{d(d+1)^2}}.
	\end{align*}
	Now, using \eqref{eq:53} we easily finish the proof.
\end{proof}
Let us turn to estimating the exponential sums over prime numbers. To regularize them we use von Mangoldt's
function defined as
\[
	\Lambda(n) =
	\begin{cases}
		\log p & \text{if } n = p^m, \text{ for some } p \in \PP \text{ and } m \in \NN, \\
		0 & \text{otherwise}.
	\end{cases}
\]
The classical way to handle von Mangoldt's function is to use Vaughan's identity (see \cite{vau0}, see also 
\cite[Lemma 4.12]{GK}), which states that for any $n > u \geq 1$,
\begin{equation}
	\label{eq:36}
	\Lambda(n) = 
	\sum_{\atop{j, k > u}{j k = n}} \Lambda(k) a_j
	+ \sum_{\atop{j \leq u}{j k = n}} \mu(j) \log(k)
	- \sum_{\atop{j \leq u^2}{j k = n}} b_j,
\end{equation}
where 
\[
	a_j = \sum_{\atop{d > u}{d \ell = j}}\mu(d),\qquad
	b_j = \sum_{\atop{d, \ell \leq u}{d \ell = j}} \mu(d) \Lambda(\ell),
\]
and $\mu(n)$ is the M\"obius function defined for $n = p_1^{m_1} \cdots p_k^{m_k}$, where $p_j$ are distinct prime numbers,
as
\[
	\mu(n) = 
	\begin{cases}
		(-1)^k  & \text{if } m_1 = \ldots = m_k, \\
		0 & \text{otherwise.}
	\end{cases}
\]
Let us observe that for any $\epsilon > 0$,
\[
	\sum_{J \leq j \leq 2J} \abs{a_j}^2 \lesssim_{\epsilon} J^{1 + \epsilon},
	\qquad\text{and}\qquad
	\sum_{J \leq j \leq 2J} \abs{b_j}^2 \lesssim_{\epsilon} J^{1 + \epsilon}.
\]
\begin{theorem}
	\label{thm:5}
	For $m \in \ZZ \setminus \{0\}$, $\tau \in \{0, 1\}$ and $1 \leq X \leq X' \leq 2X$, we set
	\[
		S(X, X') = \sum_{X < n \leq X'} 
		\exp\Big(2\pi i \big(\xi P(n) + m(\varphi_1(n) - \tau \psi(n))\big)\Big) \Lambda(n).
	\]
	Then for each $\epsilon > 0$,
	\begin{enumerate}[itemindent=0pt,leftmargin=*]
	\item if $d = 1$,
	\[
		\big| S(X, X') \big|
        \lesssim_\epsilon
		X^{1+\epsilon}
		\Big(
		\abs{m}^{\frac{1}{4}} X^{-\frac{1}{12}}
		+ \abs{m}^{\frac{1}{14}} \big(\varphi_1(X) \sigma_1(X)\big)^{-\frac{1}{14}}
		+ X^{\frac{1}{12}} \big(\varphi_1(X) \sigma_1(X)\big)^{-\frac{1}{4}}
		\Big),
	\]
	\item if $d = 2$,
	\[
		\big| S(X, X') \big|
        \lesssim_\epsilon
        X^{1+\epsilon}
        \Big(
        \abs{m}^{\frac{1}{12}} X^{-\frac{1}{16}}
		+
		\abs{m}^{\frac{1}{30}}
		\big(\varphi_1(X) \sigma_1(X)\big)^{-\frac{1}{20}}
		+
		X^{\frac{1}{32}} \big(\varphi_1(X) \sigma_1(X)\big)^{-\frac{1}{8}}
		\Big),
	\]
	\item if $d \in \{3, \ldots, 9\}$,
	\[
		\big| S(X, X') \big|
        \lesssim_\epsilon
        X^{1+\epsilon}
        \Big(
		X^{-\frac{1}{4\cdot 2^d}}
		+\abs{m}^{\frac{1}{4(2^d-1)}} X^{-\frac{d-1}{8(2^d-1)}}
		+\abs{m}^{\frac{1}{6(2^d-1)}} 
		\big(\varphi_1(X) \sigma_1(X)\big)^{-\frac{1}{3\cdot2^d}}
		\Big),
	\]
	\item if $d \geq 10$,
	\[
		\big| S(X, X') \big| 
		\lesssim_\epsilon
		X^{1+\epsilon} 
		\Big(
		X^{-\frac{1}{4d(d+1)}} 
		+ \abs{m}^{\frac{1}{2d(d+1)}} X^{-\frac{d-1}{4d(d+1)}}
		+ \abs{m}^{\frac{1}{3d(d+1)}} \big(\varphi_1(X) \sigma_1(X)\big)^{-\frac{2}{3 d(d+1)^2}}
		\Big). 
	\]
	\end{enumerate}
	The implied constants are independent of $m$, $\tau$, $X$, $X'$ and $\xi$.
\end{theorem}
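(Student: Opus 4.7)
The plan is to apply Vaughan's identity \eqref{eq:36} at a parameter $u = X^\theta$ (with $\theta$ optimized case by case) and to reduce $S(X,X')$ to exponential sums governed by Proposition \ref{prop:2}.

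\textbf{Type I terms.} The first and third summands of \eqref{eq:36} give contributions of the shape $\sum_{j \le U} c_j \sum_{k} e^{2\pi i F(jk)}$ with $U \in \{u, u^2\}$, $c_j \in \{\mu(j)\log k,\ b_j\}$, and
\[
    F(t) = \xi P(t) + m\big(\varphi_1(t) - \tau \psi(t)\big).
\]
A partial summation in $k$ removes the factor $\log k$ (or $b_j$), leaving an inner sum $\sum_{k \in I_j} e^{2\pi i F(jk)}$ over an interval of length $\asymp X/j$; this is exactly $T(X/j)$ of Proposition \ref{prop:2}. Summing the resulting bound over $j \le U$, and controlling the arithmetic weights by Cauchy--Schwarz together with $\sum_{j \le J}|b_j|^2 \lesssim_\epsilon J^{1+\epsilon}$, yields a Type I bound that grows like a small power of $u$ times the Proposition \ref{prop:2} estimate evaluated at $K = X/u$.

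\textbf{Type II terms.} After dyadic decomposition, restrict to $j \asymp J$, $k \asymp K$ with $u < J,K$ and $JK \asymp X$. Cauchy--Schwarz in $j$, combined with $\sum_j |a_j|^2 \lesssim_\epsilon J^{1+\epsilon}$, gives
\[
    \big|S_{II,J,K}\big|^2
    \lesssim J^{1+\epsilon}
    \sum_{k_1, k_2 \asymp K} \Lambda(k_1)\Lambda(k_2)
    \sum_{j \in I(k_1,k_2)} e^{2\pi i(F(jk_1) - F(jk_2))}.
\]
The diagonal $k_1 = k_2$ contributes at most $J^{2+\epsilon} K \log K \lesssim X^{2+\epsilon}/u$, which is acceptable. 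For the off-diagonal pairs, the polynomial part of $F(jk_1) - F(jk_2)$, being a polynomial of degree $d$ in $j$, is annihilated by taking $d+1$ derivatives in $j$; the transcendental part has, by \eqref{eq:28} and \eqref{eq:5}, $(d+1)$-st $j$-derivative of size comparable to $|m|\varphi_1(X)\sigma_1(X)/j^{d+1}$. Feeding this into Lemma \ref{lem:4} (or Lemma \ref{lem:3} when $d \ge 10$) bounds the inner $j$-sum, and summing over $k_1, k_2$ (with the standard gain from averaging over $k_1 - k_2$) produces an estimate matching the third term in each clause of the statement.

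\textbf{Balancing.} Choose $\theta$ so that the Type I and Type II contributions are comparable, giving the three error terms in each bound: the first comes from the diagonal and the $X^{-2/2^k}$ (or $X^{-1/k(k-1)}$) term in the van der Corput / Heath--Brown bounds; the second from the third van der Corput term on the Type~I side; the third from the Type~II off-diagonal. The main obstacle is precisely this optimization: each of the four ranges $d = 1$, $d = 2$, $3 \le d \le 9$, $d \ge 10$ requires its own balance and careful tracking of the $|m|$-dependence through the Cauchy--Schwarz step, and the split at $d = 9$ reflects exactly the crossover where Lemma \ref{lem:3} begins to dominate Lemma \ref{lem:4} in every term of Proposition \ref{prop:2}.
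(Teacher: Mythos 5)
The proposal correctly identifies the skeleton: Vaughan's identity at a parameter $u = X^{\theta}$, a split into Type I and Type II (bilinear) sums, Cauchy--Schwarz in the $j$-variable, and an application of van der Corput / Heath--Brown to the resulting differenced phase. This matches the paper's architecture. However, there are three substantive gaps, each of which would prevent you from actually arriving at the stated bounds.

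\emph{The derivative estimate for the bilinear phase is wrong, and van der Corput cannot be applied with it.} You write that the transcendental part of $F(jk_1)-F(jk_2)$ has $(d+1)$-st $j$-derivative of size comparable to $|m|\varphi_1(X)\sigma_1(X)/j^{d+1}$. That is only the size of each term separately; the difference exhibits near-diagonal cancellation. The paper applies the mean value theorem to show that for $G(t)=F(tk)-F(tk')$,
\[
|G^{(d+1)}(t)| \simeq |m|\,|k-k'|\,\frac{\varphi_1(JK)\sigma_1(JK)}{JK}\,J^{-d},
\]
i.e.\ smaller than your claim by a factor of $|k-k'|/K$. This factor is not a cosmetic refinement: Lemma~\ref{lem:4} and Lemma~\ref{lem:3} require a two-sided bound $\eta\lesssim |G^{(d+1)}|\lesssim r\eta$, and your proposed $\eta$ is only an upper bound. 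Moreover, the $|k-k'|$ dependence is precisely what makes the $r$-sum in the Weyl--van der Corput step effective.

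\emph{The Weyl--van der Corput truncation for $d\in\{1,2\}$ is missing.} If you simply expand the square from Cauchy--Schwarz and sum over all $|r|\le K$ (the "naive" version), for $d=1$ the second error term becomes $|m|^{1/4}X^{0}$, which is unbounded in $X$; and for $d=2$ you obtain $X^{-1/24}$ in place of the stated $X^{-1/16}$. The paper gets around this by applying the Weyl--van der Corput inequality with a truncation parameter $R=K^{1/3}$ (resp.\ $K^{1/2}$) smaller than $K$, and in this restricted range the mean value theorem gain from point (1) is essential: $\sum_{|r|\le R}|r|^{1/(2(2^d-1))}$ is genuinely smaller than $R\,K^{1/(2(2^d-1))}$. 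Without the truncation, the $d=1,2$ cases fail.

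\emph{The range $u<j\le u^2$ in the third Vaughan summand cannot be treated as Type I.} The third summand runs over $j\le u^2$, but only the part $j\le u$ is Type I. Summing the Proposition~\ref{prop:2} bound over $j\le u^2$ (whether trivially with $|b_j|\le\log j$ or by Cauchy--Schwarz, which gives no gain here since the per-$j$ bound is essentially $j$-independent after setting $K\asymp X/j$) produces a factor $u^2$, not a "small power of $u$"; this destroys the balance against the $u^{-1/2}$ (or $u^{-1/6}$, $u^{-1/4}$) from the bilinear terms. The paper splits this summand into $\Sigma_{21}$ ($j\le u$, Type I) and $\Sigma_{22}$ ($u<j\le u^2$, treated bilinearly alongside $\Sigma_3$), and this split is necessary.

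Finally, a small inaccuracy: the paper does not use Cauchy--Schwarz with $\sum|b_j|^2\lesssim J^{1+\epsilon}$ on the Type~I side; it simply uses $|b_j|\le\log j$ there. The $\ell^2$ bound on the Vaughan coefficients is used only in the bilinear (Type II) step, after Cauchy--Schwarz in $j$.
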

\begin{proof}
	To simplify notation, let $F: [1, \infty) \rightarrow \RR$ stand for 
	\[
		F(t) = \xi P(t) + m(\varphi_1(t) - \tau \psi(t)). 
	\]
	Fix $1 \leq u \leq X^\frac{1}{3}$ whose value will be determined later. By Vaughan's identity \eqref{eq:36}, 
	we can write
	\[
		S(X, X') = \Sigma_1 - \Sigma_{21} - \Sigma_{22} + \Sigma_3,
	\]
	where
	\begin{align*}
		\Sigma_1 &= \sum_{j \leq u} \mu(j) \sum_{X/j < k \leq X'/j} e^{2\pi i F(j k)} \log(k), \\
		\Sigma_{21} &= \sum_{j \leq u} b_j \sum_{X/j < k \leq X'/j} e^{2\pi i F(j k)},  \\
		\Sigma_{22} &= \sum_{u < j \leq u^2} b_j \sum_{X/j < k \leq X'/j} e^{2\pi i F(j k)}, \\
		\Sigma_3 &=
		\sum_{u < j \leq X'/u} a_j \sum_{\atop{X/j < k \leq X'/j}{k > u}} e^{2\pi i F(j k)} \Lambda(k).
	\end{align*}
	Therefore, our aim is reduced to bounding each term separately.

	\noindent
    {\bf The estimate for $\Sigma_1$ and $\Sigma_{21}$.} 
	For $1 \leq j \le u$ we set
	\[
		T_j(K) = \sum_{X/j < k \leq K} e^{2 \pi i F(j k)}.
	\]
	By the partial summation, we can write
	\[
		\sum_{X/j < k \leq X'/j} e^{2\pi i F(j k)} \log(k)
		=
		T_j(X'/j) \log (X'/j)
		- \int_{X}^{X'} T_j(t/j) \frac{{\rm d} t}{t},
	\]
	thus
	\begin{align*}
		\big|\Sigma_1\big| \lesssim
		(\log X) \sum_{j \leq u} \max_{X/j \leq K \leq X'/j} \big| T_j(K) \big|.
	\end{align*}
	Moreover, since
	\[
		\abs{b_j} \leq \sum_{\ell \mid j} \Lambda(\ell) = \log (j),
	\]
	we have
	\[
		\big| \Sigma_{21} \big|
		\lesssim
		(\log X) \sum_{j \leq u} \max_{X/j \leq K \leq X'/j} \big| T_j(K) \big|.
	\]
	Therefore, by Proposition \ref{prop:2}(i), we obtain
	\begin{align}
		\nonumber
		\abs{\Sigma_1}+\abs{\Sigma_{21}} 
		&\lesssim
		(\log X) \sum_{j \leq u} \max_{X/j \leq K \leq X'/j} 
		\abs{m}^{\frac{1}{2(2^d-1)}} (j K)^{1+\epsilon} \big(\varphi_1(jK) \sigma_1(j K) \big)^{-\frac{1}{2^d}}\\
		\label{eq:51}
		&\lesssim
		u
		\abs{m}^{\frac{1}{2(2^d-1)}} 
		X^{1+2 \epsilon} \big(\varphi_1(X) \sigma_1(X) \big)^{-\frac{1}{2^d}}.
	\end{align}
	Similarly, Proposition \ref{prop:2}(ii) gives
	\begin{align}
		\label{eq:52}
		\abs{\Sigma_1}+\abs{\Sigma_{21}}
        &\lesssim
        u \abs{m}^{\frac{1}{d(d+1)}}
        X^{1+2 \epsilon} \big(\varphi_1(X) \sigma_1(X) \big)^{-\frac{2}{d(d+1)^2}}.
	\end{align}

	\noindent
    {\bf The estimate for $\Sigma_{22}$ and $\Sigma_{3}$.}
	Controlling $\Sigma_{22}$ and $\Sigma_3$ requires more work. First, let us dyadically split the defining sums to get
	\begin{equation}
		\label{eq:45}
		\abs{\Sigma_{22}} \lesssim
		(\log X)^2
		\max_{\atop{u \leq J < J' \leq 2J}{J' \leq u^2}} 
		\max_{\atop{X/u^2 \leq K < K' \leq 2K}{K' \leq X'/u}}
        \bigg|
        \underset{X < j k \leq X'}{
        \sum_{J < j \leq J'} \sum_{K < k \leq K'}}
		e^{2\pi i F(j k)} b_j
        \bigg|,
	\end{equation}
	and
	\begin{equation}
		\label{eq:46}
		\abs{\Sigma_3} \lesssim (\log X)^2 
		\max_{\atop{u \leq J < J' \leq 2J}{J' \leq u^2}} 
		\max_{\atop{X/u^2 \leq K < K' \leq 2K}{K' \leq X'/u}}
		\bigg|
		\underset{X < j k \leq X'}{
		\sum_{J < j \leq J'} \sum_{K < k \leq K'}}
		e^{2 \pi i F(j k)} \Lambda(k) a_j
		\bigg|.
	\end{equation}
	To be able to deal with both cases simultaneously, let us consider two sequences of complex numbers 
	$(A_j : j \in \NN)$ and $(B_k : k \in \NN)$, such that for each $\epsilon > 0$,
	\begin{equation}
		\label{eq:38}
        \sum_{J \leq j \leq 2J} \abs{A_j}^2 \lesssim_\epsilon J^{1+\epsilon},\qquad\text{and}\qquad
        \sum_{K \leq k \leq 2K} \abs{B_k}^2 \lesssim_\epsilon K^{1+\epsilon},
    \end{equation}
	and study exponential sums of a form
	\[
        \underset{X < j k \leq X'}{
        \sum_{J < j \leq J'} \sum_{K < k \leq K'}}
		e^{2 \pi i F(j k)} A_j B_k,
	\]
	where $J < J' \leq 2J$ and $K < K' \leq 2K$. Without loss of generality we may assume that $K \leq J$. By
	Cauchy--Schwarz inequality and \eqref{eq:38}, we have
	\begin{align*}
		\Big|
		\underset{X < j k \leq X'}{
        \sum_{J < j \leq J'} \sum_{K < k \leq K'}}
		e^{2 \pi i F(j k)} A_j B_k
		\Big|^2
		&\lesssim
		J^{1 + \epsilon}
		\sum_{J < j \leq J'}
        \Big|
        \sum_{\atop{K < k \leq K'}{X < j k \leq X'}} e^{2\pi i F(j k)} B_k
        \Big|^2.
	\end{align*}
	To estimate the right-hand side, we expand the square and rearrange terms to get
	\begin{align}
		\label{eq:37}
        \Big|
        \sum_{\atop{K < k \leq K'}{X < j k \leq X'}} e^{2\pi i F(j k)} B_k
        \Big|^2
		&=
		\sum_{\abs{r} \leq K}
		\sum_{\atop{K < k, k+r \leq K'}{X < j k, j(k+r)\leq X'}}
		\exp\Big(2\pi i \big(F(j k) - F(j (k+r))\big)\Big)
        B_k \overline{B_{k+r}}.
	\end{align}
	Therefore,
	\begin{equation}
		\label{eq:35}
		\Big|
		\underset{X < j k \leq X'}{
        \sum_{J < j \leq J'} \sum_{K < k \leq K'}}
		e^{2 \pi i F(j k)} A_j B_k
		\Big|^2
		\lesssim
		J^{1+\epsilon}
        \sum_{\abs{r} \leq K}
		\sum_{K < k, k+r \leq K'}
		\abs{B_k} \abs{B_{k+r}}
		\abs{U_{k, k+r}},
	\end{equation}
	where for $K < k, k' < K'$, we have set
	\[
		U_{k, k'}
		=
		\sum_{j \in \calJ_{k, k'}}
		\exp\Big(2\pi i \big(F(j k) - F(j k')\big)\Big),
	\]
	and $\calJ_{k, k'} = \left(\max\big\{X/k, X/k', J\big\}, \min\big\{X'/k, X'/k', J'\big\}\right] \cap \ZZ$. To estimate
	$U_{k, k'}$, we are going to apply van der Corput's lemma. Let us fix $k \neq k'$. Setting
	$G(t) = F(t k) - F(t k')$ for $t \in \calJ_{k, k'}$, we can write
	\[
		\big|G^{(d+1)}(t) \big| \simeq 
		\abs{m} 
		\Big|
		\big(\varphi_1^{(d+1)}(t k) k^{d+1} - \varphi_1^{(d+1)}(t k') (k')^{d+1}\big)
		-
		\tau\big(\psi^{(d+1)}(t k) k^{d+1} - \psi^{(d+1)}(t k') (k')^{d+1}\big)
		\Big|.
	\]
	By the mean value theorem, for some $x$ between $tk$ and $tk'$ we have 
	\[
		\varphi_1^{(d+1)}(t k) (t k)^{d+1} - \varphi_1^{(d+1)}(t k') (t k')^{d+1}
		=
		\big(
		\varphi_1^{(d+2)}(x) x^{d+1} + (d+1) \varphi_1^{(d+1)}(x) x^d
		\big)
		(k - k') t,
	\]
	thus, by \eqref{eq:28}, we obtain
	\[
		\big|
        \varphi_1^{(d+1)}(t k) (t k)^{d+1} - \varphi_1^{(d+1)}(t k') (t k')^{d+1}
        \big|
		\simeq
		\frac{\varphi_1(J K) \sigma_1(J K)}{J K} |k-k'| J.
	\]
	Similarly, we get
	\[
		\big|
        \psi^{(d+1)}(t k) (t k)^{d+1} - \psi^{(d+1)}(t k') (t k')^{d+1}
        \big|
        \simeq
        \frac{\varphi_2(J K) \sigma_2(J K)}{(J K)^2} |k-k'| J.
	\]
	Since
	\[
		\frac{\varphi_2(JK) \sigma_2(JK)}{JK \varphi_1(JK) \sigma_1(JK)} = o(1),
	\]
	we conclude that for $t \in \calJ_{k, k'}$,
	\[
		\big|G^{(d+1)}(t) \big| 
		\simeq
		\abs{m} \cdot |k-k'| \frac{\varphi_1(J K) \sigma_1(J K)}{J K} J^{-d}.
	\]
	Now, by Lemma \ref{lem:4}, we get
	\begin{align}
		\nonumber
		\big|
		U_{k, k'}
		\big|
		&\lesssim
		J
		\bigg(\abs{m} \cdot \abs{k-k'} \frac{\varphi_1(J K) \sigma_1(J K)}{J K} J^{-d} \bigg)^{\frac{1}{2(2^d-1)}}
		+
		J^{1 - \frac{1}{2^d}}\\
		\nonumber
		&\phantom{\lesssim}+
		J
		\bigg(
		J^{d+1}
		\abs{m} \cdot |k-k'| \frac{\varphi_1(J K) \sigma_1(J K)}{J K} J^{-d}
		\bigg)^{-\frac{1}{2^d}}\\
		\label{eq:12}
		&\begin{aligned}
		&\lesssim
		J^{1 - \frac{d}{2(2^d-1)}} \big(\abs{m} \cdot \abs{k-k'}\big)^{\frac{1}{2(2^d-1)}}
		+
		J^{1-\frac{1}{2^d}}\\
		&\phantom{\lesssim}+
		J K^{\frac{1}{2^d}} \big(\abs{m} \cdot \abs{k-k'} \big)^{-\frac{1}{2^d}} 
		\big(\varphi_1(JK) \sigma_1(JK) \big)^{-\frac{1}{2^d}}.
		\end{aligned}
	\end{align}
	By Cauchy--Schwarz inequality and \eqref{eq:38}, we obtain
	\begin{align*}
		&\sum_{1 \leq \abs{r} \leq K} \sum_{K < k, k+r \leq K'} \abs{B_k} \abs{B_{k+r}} 
		J^{1 - \frac{d}{2(2^d-1)}} \abs{m r}^{\frac{1}{2(2^d-1)}} \\
		&\qquad\qquad\lesssim
		J^{1 - \frac{d}{2(2^d-1)}} \abs{m}^{\frac{1}{2(2^d-1)}}
		\sum_{1 \leq \abs{r} \leq K} \abs{r}^{\frac{1}{2(2^d-1)}} \sum_{K < k \leq K'} \abs{B_k}^2 \\
		&\qquad\qquad\lesssim
		J^{1 - \frac{d}{2(2^d-1)}} \abs{m}^\frac{1}{2(2^d-1)}
		K^{1 + \frac{1}{2(2^d-1)}} K^{1+\epsilon}.
	\end{align*}
	Analogously, we show that
	\[
		\sum_{1 \leq \abs{r} \leq K} \sum_{K < k, k+r \leq K'} \abs{B_k} \abs{B_{k+r}}
        J^{1-\frac{1}{2^d}}\\
		\lesssim
		J^{1 - \frac{1}{2^d}} K^{2 + \epsilon},
	\]
	and
	\begin{align*}
		&\sum_{1 \leq \abs{r} \leq K} \sum_{K < k, k+r \leq K'} \abs{B_k} \abs{B_{k+r}} 
		J K^{\frac{1}{2^d}} \abs{m r}^{-\frac{1}{2^d}}
        \big(\varphi_1(JK) \sigma_1(JK) \big)^{-\frac{1}{2^d}} \\
		&\qquad\qquad\lesssim
		J \abs{m}^{-\frac{1}{2^d}} K^{2+\epsilon}
		\big(\varphi_1(JK) \sigma_1(JK) \big)^{-\frac{1}{2^d}}. 
	\end{align*}
	Therefore,
	\begin{align*}
		&
		\sum_{1 \leq \abs{r} \leq K} \sum_{K < k, k+r \leq K'} 
		\abs{B_k} \abs{B_{k+r}} \abs{U_{k, k'}} \\
		&\qquad\qquad\lesssim
		J K^{2 + \epsilon}
		\Big(\abs{m}^{\frac{1}{2(2^d-1)}} J^{-\frac{d}{2(2^d-1)}} K^{\frac{1}{2(2^d-1)}} 
		+ J^{-\frac{1}{2^d}} 
		+ \abs{m}^{-\frac{1}{2^d}} \big(\varphi_1(JK) \sigma_1(JK) \big)^{-\frac{1}{2^d}}\Big).
	\end{align*}
	Since for $r=0$, we have
	\[
		\sum_{K < k \leq K'} \abs{B_k}^2 \abs{U_{k, k}} \lesssim  J K^{1+\epsilon},
	\]
	by \eqref{eq:35}, we can estimate
	\begin{equation}
		\label{eq:39}
		\begin{aligned}
		&\Big|
        \underset{X < j k \leq X'}{
        \sum_{J < j \leq J'} \sum_{K < k \leq K'}}
        e^{2 \pi i F(j k)} A_j B_k
        \Big|^2\\
        &\qquad\qquad\lesssim
		J^{2+\epsilon} K^{2 + \epsilon}
		\Big(K^{-1}
		+
		\abs{m}^{\frac{1}{2(2^d-1)}} J^{-\frac{d}{2(2^d-1)}} K^{\frac{1}{2(2^d-1)}}
		+
		J^{-\frac{1}{2^d}}
		+
		\big(\varphi_1(J K) \sigma_1(J K)\big)^{-\frac{1}{2^d}} \Big),
		\end{aligned}
	\end{equation}
	provided that $K \leq J$. We are now going to apply \eqref{eq:39} to derive the estimates for $\Sigma_{22}$ and
	$\Sigma_3$. Let us recall that $u \leq J \leq u^2$, $X/u^2 \leq K \leq X'/u$, $u^3 < X$ and $X < J K \leq 2X$, thus
	\[
		u \leq \min\{J, K\} \leq \sqrt{3 X} \leq \sqrt{3} \max\{J, K\}.
	\]
	Hence, \eqref{eq:39} applied to \eqref{eq:45} and \eqref{eq:46} results in
	\begin{equation}
		\label{eq:47}
		\begin{aligned}
		\abs{\Sigma_{22}}+ \abs{\Sigma_3}
		\lesssim
		X^{1+\epsilon}
		\Big(
		u^{-\frac{1}{2}}
		+\abs{m}^{\frac{1}{4(2^d-1)}} X^{-\frac{d-1}{8(2^d-1)}}
		+
		X^{-\frac{1}{2^{d+2}}} + \big(\varphi_1(X) \sigma_1(X)\big)^{-\frac{1}{2^{d+1}}}
		\Big).
		\end{aligned}
	\end{equation}
	For $d \in \{1, 2\}$, we improve the estimate \eqref{eq:47}, by applying to \eqref{eq:37} the Weyl--van der Corput's 
	inequality, see \cite[Lemma 2.5]{GK}. For each $1 \leq R \leq K$, we have
	\[
		\sum_{J < j \leq J'}
		\Big|
        \sum_{\atop{K < k \leq K'}{X < j k \leq X'}} e^{2\pi i F(j k)} B_k
        \Big|^2
		\leq
		\bigg(1 + \frac{K}{R}\bigg)
		\sum_{\abs{r} \leq R}
		\bigg(1 - \frac{\abs{r}}{R}\bigg)
		\sum_{K \leq k, k+r \leq K'} \abs{B_k} \abs{B_{k+r}} \abs{U_{k, k'}}.
	\]
	For $d = 1$, we take $R = K^{\frac{1}{3}}$. Then, by \eqref{eq:12}, we get
	\begin{align*}
		\sum_{J < j \leq J'}
		\Big|
        \sum_{\atop{K < k \leq K'}{X < j k \leq X'}} e^{2\pi i F(j k)} B_k
        \Big|^2
		&\lesssim
		J
		K^{1+\epsilon}
		(K + R)
		\Big(R^{-1} 
		+
		\abs{m}^{\frac{1}{2}} J^{-\frac{1}{2}} R^{\frac{1}{2}}
		+
		K^{\frac{1}{2}} R^{-\frac{1}{2}}
		\big(\varphi_1(J K) \sigma_1(J K)\big)^{-\frac{1}{2}}
		\Big) \\
		&\lesssim
		J K^{2+\epsilon}
		\Big(
		K^{-\frac{1}{3}} + \abs{m}^{\frac{1}{2}} J^{-\frac{1}{2}} K^{\frac{1}{6}}
		+ K^{\frac{1}{3}} \big(\varphi_1(J K) \sigma_1(J K)\big)^{-\frac{1}{2}}
		\Big).
	\end{align*}
	Therefore,
	\begin{equation}
		\label{eq:14}
		\abs{\Sigma_{22}} + \abs{\Sigma_3}
        \lesssim
        X^{1+\epsilon}
		\Big(
		u^{-\frac{1}{6}} + \abs{m}^{\frac{1}{4}} X^{-\frac{1}{12}} + X^{\frac{1}{12}} 
		\big(\varphi_1(X) \sigma_1(X)\big)^{-\frac{1}{4}}
		\Big).
	\end{equation}
	Similarly, for $d = 2$; we set $R = K^{\frac{1}{2}}$ which entails that
	\begin{align*}
		&
		\sum_{J < j \leq J'}
		\Big|
		\sum_{\atop{K < k \leq K'}{X < j k \leq X'}} e^{2\pi i F(j k)} B_k
        \Big|^2 \\
		&\qquad\qquad\lesssim
		J
		K^{1+\epsilon}
		(K + R)
		\Big(R^{-1} 
		+
		\abs{m}^{\frac{1}{6}} J^{-\frac{1}{3}} R^{\frac{1}{6}}
		+
		J^{-\frac{1}{4}}
		+
		\abs{m}^{-\frac{1}{4}}
		K^{\frac{1}{4}} R^{-\frac{1}{4}}
		\big(\varphi_1(J K) \sigma_1(J K)\big)^{-\frac{1}{4}}
		\Big) \\
		&\qquad\qquad\lesssim
		J K^{2+\epsilon}
		\Big(
		K^{-\frac{1}{2}} + \abs{m}^{\frac{1}{6}} J^{-\frac{1}{3}} K^{\frac{1}{12}}
		+ \abs{m}^{-\frac{1}{4}} K^{\frac{1}{8}} \big(\varphi_1(J K) \sigma_1(J K)\big)^{-\frac{1}{4}}
		\Big),
	\end{align*}
	and hence
	\begin{equation}
		\label{eq:20}
		\abs{\Sigma_{22}} + \abs{\Sigma_3}
        \lesssim
        X^{1+\epsilon}
		\Big(
		u^{-\frac{1}{4}} + \abs{m}^{\frac{1}{12}} X^{-\frac{1}{16}}
		+
		X^{\frac{1}{32}} 
		\big(\varphi_1(X) \sigma_1(X)\big)^{-\frac{1}{8}}
		\Big).
	\end{equation}
	Next, let us observe that for $d \geq 2$, while estimating $U_{k, k'}$, instead of Lemma \ref{lem:4} we can use
	Lemma \ref{lem:3}. This leads to
	\begin{align*}
		\big|
        U_{k, k'}
        \big|
        &\lesssim
        J^{1+\epsilon - \frac{1}{d+1}}
		\big(\abs{m} \cdot \abs{k-k'} \big)^{\frac{1}{d(d+1)}}
		+
		J^{1 + \epsilon - \frac{1}{d(d+1)}} \\
		&\phantom{\lesssim}+
		J^{1 + \epsilon} K^{\frac{2}{d(d+1)^2}} \big(\abs{m} \cdot \abs{k-k'}\big)^{-\frac{2}{d(d+1)^2}}
		\big(\varphi_1(J K) \sigma_1(J K)\big)^{-\frac{2}{d(d+1)^2}},
	\end{align*}
	and
	\begin{align*}
		&
		\sum_{J < j \leq J'}
		\Big|
		\sum_{\atop{K < k \leq K'}{X < j k \leq X'}}
        e^{2 \pi i F(j k)} B_k
        \Big|^2 \\
		&\qquad\qquad\lesssim
        J^{1+\epsilon} K^{2 + \epsilon} 
		\Big(
		K^{-1} 
		+\abs{m}^{\frac{1}{d(d+1)}} J^{-\frac{1}{d+1}} K^{\frac{1}{d(d+1)}}
		+J^{-\frac{1}{d(d+1)}} 
		+ \big(\varphi_1(J K) \sigma_1(J K)\big)^{-\frac{2}{d(d+1)^2}}
		\Big),
	\end{align*}
	which entails that
	\begin{equation}
		\label{eq:50}
		\abs{\Sigma_{22}} + \abs{\Sigma_3}
        \lesssim
		X^{1+\epsilon}
		\Big(u^{-\frac{1}{2}} 
		+\abs{m}^{\frac{1}{2d(d+1)}} X^{-\frac{d-1}{4d(d+1)}}
		+X^{-\frac{1}{4d(d+1)}} + \big(\varphi_1(X) \sigma_1(X)\big)^{-\frac{1}{d(d+1)^2}}
		\Big).
	\end{equation}

	\noindent
    {\bf Conclusion.}
	In view of the estimates \eqref{eq:51} and \eqref{eq:47}, by selecting
	\[
		u = \abs{m}^{-\frac{2}{6(2^d-1)}} \big(\varphi_1(X) \sigma_1(X)\big)^{\frac{2}{3} \cdot \frac{1}{2^d}},
	\]
	we obtain
	\begin{align*}
		&\big|S(X, X')\big| \\
		&\qquad\lesssim
		X^{1+\epsilon} 
		\Big(u \abs{m}^{\frac{1}{2(2^d-1)}} \big(\varphi_1(X) \sigma_1(X)\big)^{-\frac{1}{2^d}} 
		+ u^{-\frac{1}{2}}
		+
		\abs{m}^{\frac{1}{4(2^d-1)}} X^{-\frac{d-1}{8(2^d-1)}} + X^{-\frac{1}{2^{d+2}}} +
		\big(\varphi_1(X) \sigma_1(X)\big)^{-\frac{1}{2^{d+1}}}
		\Big) \\
		&\qquad\lesssim
		X^{1+\epsilon}
		\Big(
		\abs{m}^{\frac{1}{6(2^d-1)}} \big(\varphi_1(X) \sigma_1(X)\big)^{-\frac{1}{3 \cdot 2^d}}
		+
		\abs{m}^{\frac{1}{4(2^d-1)}} X^{-\frac{d-1}{8(2^d-1)}} + X^{-\frac{1}{2^{d+2}}}
		\Big).
	\end{align*}
	Analogously, setting
	\[
		u = \abs{m}^{-\frac{2}{3 d(d+1)}} \big(\varphi_1(X) \sigma_1(X)\big)^{\frac{4}{3 d (d+1)^2}},
	\]
	from \eqref{eq:52} and \eqref{eq:50}, we get
	\begin{align*}
		\big|S(X, X')\big|
		\lesssim
		X^{1+\epsilon}
		\Big(
		\abs{m}^{\frac{1}{3d(d+1)}} \big(\varphi_1(X) \sigma_1(X)\big)^{-\frac{2}{3 d(d+1)^2}}
		+\abs{m}^{\frac{1}{2d(d+1)}} X^{-\frac{d-1}{4d(d+1)}}
		+X^{-\frac{1}{4d(d+1)}} 
		\Big).
	\end{align*}
	For $d = 1$, we take
	\[
		u = \abs{m}^{-\frac{3}{7}} \big(\varphi_1(X) \sigma_1(X)\big)^{\frac{3}{7}},
	\]
	and use \eqref{eq:51} together with \eqref{eq:14}, to get
	\[
		\big|S(X, X')\big|
        \lesssim
        X^{1+\epsilon}
        \Big(
		\abs{m}^{\frac{1}{14}} \big(\varphi_1(X) \sigma_1(X)\big)^{-\frac{1}{14}}
		+
		\abs{m}^{\frac{1}{4}} X^{-\frac{1}{12}} 
		+
		X^{\frac{1}{12}} \big(\varphi_1(X) \sigma_1(X)\big)^{-\frac{1}{4}}
		\Big).
	\]
	Lastly, for $d = 2$ and
	\[
		u = \abs{m}^{-\frac{2}{15}} \big(\varphi_1(X) \sigma_1(X)\big)^{\frac{1}{5}},
	\]
	by \eqref{eq:51} and \eqref{eq:20}, we obtain
	\[
		\big|S(X, X')\big|
        \lesssim
        X^{1+\epsilon}
        \Big(
        \abs{m}^{\frac{1}{30}} \big(\varphi_1(X) \sigma_1(X)\big)^{-\frac{1}{20}}
        +
        \abs{m}^{\frac{1}{12}} X^{-\frac{1}{16}}
		+
		X^{\frac{1}{32}} \big(\varphi_1(X) \sigma_1(X)\big)^{-\frac{1}{8}}
        \Big),
	\]
	which concludes the proof of theorem.
\end{proof}
The reasoning for $\bfP_+$ and $\bfP_-$ are similar, therefore to simplify the notation we are going to write
\[
	\bfP = \bfP_+ = \big\{p \in \PP : \{\vphi_1(p)\} < \psi(p) \big\}.
\]
For $N \in \NN$ we set $\PP_N = \PP \cap [1, N]$ and $\mathbf{P}_N = \mathbf{P} \cap [1, N]$. In what follows, we need
a characterization of the sets $\bfP$. The proof follows a line parallel to \cite[Lemma 2.2]{kmt0}.
\begin{lemma}
	\label{lem:1}
	$p\in\mathbf P$ if and only if $p \in \PP$ and
	$\lfloor \varphi_1(p) \rfloor - \lfloor \varphi_1(p) - \psi(p) \rfloor = 1$.
\end{lemma}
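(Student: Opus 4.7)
The plan is to reduce the statement to a direct computation with the integer and fractional parts of $\varphi_1(p)$, exploiting only the hypothesis $0 < \psi(p) \leq \tfrac{1}{2}$ built into the definition of $\bfP$. Since $p \in \PP$ appears on both sides of the equivalence, I will treat it as a standing assumption and concentrate entirely on the arithmetic of the fractional part.

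First I would write $\varphi_1(p) = n + \alpha$ with $n = \lfloor \varphi_1(p) \rfloor$ and $\alpha = \{\varphi_1(p)\} \in [0, 1)$. Then
\[
	\varphi_1(p) - \psi(p) = n + (\alpha - \psi(p)),
\]
and because $\psi(p) \in (0, \tfrac{1}{2}]$, the shifted quantity $\alpha - \psi(p)$ is automatically confined to $[-\tfrac{1}{2}, 1)$, in particular to $(-1, 1)$. Consequently
\[
	\lfloor \varphi_1(p) - \psi(p) \rfloor
	=
	\begin{cases} n & \text{if } \alpha \geq \psi(p), \\ n - 1 & \text{if } \alpha < \psi(p), \end{cases}
\]
so the floor difference $\lfloor \varphi_1(p) \rfloor - \lfloor \varphi_1(p) - \psi(p) \rfloor$ is either $0$ or $1$, and equals $1$ precisely when $\{\varphi_1(p)\} < \psi(p)$.

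From this dichotomy both implications drop out. For $(\Rightarrow)$, membership in $\bfP$ gives $\alpha < \psi(p)$ by definition, so we land in the second case and the floor difference is $n - (n-1) = 1$. For $(\Leftarrow)$, the hypothesis that the floor difference equals $1$ excludes the first case, forcing $\alpha < \psi(p)$, i.e.\ $\{\varphi_1(p)\} < \psi(p)$, which combined with $p \in \PP$ is exactly the definition of $\bfP$.

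I do not expect any substantive obstacle: this is purely a floor-function identity, modelled on the argument of \cite[Lemma 2.2]{kmt0}. The only subtle point worth flagging explicitly in the write-up is that the bound $\psi(p) \leq \tfrac{1}{2}$ (in fact any bound strictly less than $1$ would do) is precisely what keeps $\alpha - \psi(p)$ above $-1$ and thus prevents a third case from appearing in the dichotomy above.
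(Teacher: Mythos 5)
Your proof is correct. It takes a mildly different organizational route than the paper: where you set $\alpha = \{\varphi_1(p)\}$, observe $\alpha - \psi(p) \in (-1,1)$, and read off that $\lfloor \varphi_1(p)\rfloor - \lfloor \varphi_1(p)-\psi(p)\rfloor$ equals $1$ exactly when $\alpha < \psi(p)$ (giving both implications at once), the paper argues the two directions separately. For the forward direction it shows the floor difference lies strictly between $0$ and $\tfrac{3}{2}$ and then invokes integrality; for the reverse direction it substitutes $\lfloor\varphi_1(p)\rfloor = 1 + \lfloor\varphi_1(p)-\psi(p)\rfloor$ into the fractional-part inequality. Your single dichotomy is a bit cleaner and avoids the integrality-sandwich step. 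Both hinge on the same hypothesis $\psi(p) \le \tfrac12 < 1$, which you correctly flag as the point keeping $\alpha - \psi(p) > -1$; the paper uses the stronger bound $\psi(p) \le \tfrac12$ only to land in $(0,\tfrac32)$, but as you note, any bound strictly below $1$ would suffice.
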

\begin{proof}
	We begin with the forward implication; it suffices to show that if $p \in \bfP$, then
	the integer
	\[
		\lfloor \varphi_1(p) \rfloor - \lfloor \varphi_1(p) - \psi(p) \rfloor,
	\]
	belongs to $\left(0,\tfrac{3}{2}\right)$. By definition, if
	$p \in \mathbf P$ then $0\leq \varphi_1(p) - \lfloor \varphi_1(p) \rfloor < \psi(p)$, thus
	\[
		- \varphi_1(p) \leq - \lfloor  \varphi_1(p) \rfloor < \psi(p) - \varphi_1(p),
	\]
	if and only if
	\[
		\varphi_1(p) \geq \lfloor  \varphi_1(p) \rfloor > \varphi_1(p) - \psi(p),
	\]
	from where it follows that
	\[
		\lfloor \varphi_1(p)\rfloor - \lfloor \varphi_1(p) - \psi(p) \rfloor
		> \{ \varphi_1(p) - \psi(p) \} \geq 0.
	\]
	In view of $\lfloor  \varphi_1(p) - \psi(p) \rfloor \geq \varphi_1(p) - \psi(p) -1$, we obtain
	\begin{align*}
		\lfloor \varphi_1(p) \rfloor - \lfloor \varphi_1(p) - \psi(p) \rfloor
		& \leq \lfloor  \varphi_1(p) \rfloor - \varphi_1(p) + \psi(p) + 1\\
		& \leq \psi(p) +1 < \tfrac{3}{2}.
	\end{align*}
	We now turn to the reverse implication; if $p \in \PP$ and
	$\lfloor \varphi_1(p) \rfloor = 1 +\lfloor  \varphi_1(p) - \psi(p) \rfloor$, then we have
	\begin{align*}
		0 & \leq \varphi_1(p) - \lfloor  \varphi_1(p) \rfloor
		= \varphi_1(p) -1 - \lfloor  \varphi_1(p) - \psi(p) \rfloor \\
		& < \varphi_1(p) - 1 + 1 + \psi(p) - \varphi_1(p) = \psi(p).
	\end{align*}
	Consequently, we get $\{ \varphi_1(p) \} < \psi(p)$, as desired.
\end{proof}
We are now ready to prove the main theorem of this section. 
\begin{theorem}
	\label{thm:3}
	For each $\epsilon > 0$, satisfying
	\begin{enumerate}[itemindent=*,leftmargin=0pt]
		\item if $d = 1$,
		\[
			\left\{
			\begin{aligned}
			&(1-\gamma_1) &+ 15&(1-\gamma_2) &+ 84 &\epsilon &< 1, \\
			3 &(1-\gamma_1) &+ 12&(1-\gamma_2) &+ 60 &\epsilon &< 2
			\end{aligned}
			\right.
		\]
		\item if $d = 2$,
		\[
			\left\{
			\begin{aligned}
			3&(1-\gamma_1) &+ 62&(1-\gamma_2) &+ 360 &\epsilon &< 3, \\
			4&(1-\gamma_1) &+ 32&(1-\gamma_2) &+ 160 &\epsilon &< 3
			\end{aligned}
			\right.
		\]
		\item if $d \in \{3, \ldots, 9\}$,
		\[
			\left\{
			\begin{aligned}
			\frac{1}{3 \cdot 2^d} (1-\gamma_1) + \bigg(1+\frac{1}{6(2^d-1)}\bigg) &(1- \gamma_2)
			+ 6 \epsilon &< \frac{1}{3 \cdot 2^d}, \\
			&(1-\gamma_2) + 4 \epsilon &< \frac{1}{4\cdot2^d},
			\end{aligned}
			\right.
		\]
		\item if $d \geq 10$,
		\begin{equation}
			\label{eq:55}
			\frac{2}{3d(d+1)^2} (1 -\gamma_1) + \bigg(1+\frac{1}{3d(d+1)}\bigg)(1-\gamma_2) 
			+ 6 \epsilon < \frac{2}{3d(d+1)^2},
		\end{equation}
	\end{enumerate}
	we have
	\[
		\sum_{p \in \bfP_N} e^{2\pi i \xi P(p)} \log (p)
		=
		\sum_{p \in \PP_N} e^{2\pi i \xi P(p)} \log (p) \psi(p) 
		+
		\calO\Big(\varphi_2(N) N^{-\epsilon}\Big).
	\]
\end{theorem}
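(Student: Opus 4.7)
The plan is to use Lemma~\ref{lem:1} to convert $\ind{\bfP}$ restricted to primes into an integer-valued weight, then Fourier-expand the resulting fractional parts, reducing to the prime exponential sums controlled by Theorem~\ref{thm:5}. By Lemma~\ref{lem:1}, for $p \in \PP$,
\[
\ind{\bfP}(p) = \lfloor \varphi_1(p) \rfloor - \lfloor \varphi_1(p) - \psi(p) \rfloor.
\]
Writing $\lfloor x \rfloor = x - \{x\}$ and introducing the sawtooth $\rho(x) = \{x\} - \tfrac{1}{2}$, this equals $\psi(p) + \rho(\varphi_1(p) - \psi(p)) - \rho(\varphi_1(p))$. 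Thus the theorem reduces to proving that for each $\tau \in \{0, 1\}$,
\[
E_\tau := \sum_{p \in \PP_N} e^{2\pi i \xi P(p)} \log(p) \, \rho\bigl(\varphi_1(p) - \tau \psi(p)\bigr) = \calO\bigl(\varphi_2(N) N^{-\epsilon}\bigr).
\]

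To estimate $E_\tau$, I would apply Vaaler's approximation: for every $M \geq 1$ there exist trigonometric polynomials
\[
V_M(x) = \sum_{0 < |m| \leq M} c_m e^{2\pi i m x}, \qquad W_M(x) = \sum_{|m| \leq M} d_m e^{2\pi i m x},
\]
with $W_M \geq 0$, $|c_m| \lesssim 1/|m|$, $|d_m| \lesssim 1/M$, and $|\rho - V_M| \leq W_M$. Substituting into $E_\tau$, the main oscillatory part becomes
\[
\sum_{0 < |m| \leq M} c_m \sum_{p \in \PP_N} e^{2\pi i (\xi P(p) + m\varphi_1(p) - \tau m \psi(p))} \log(p),
\]
while the remainder is dominated by the $W_M$-expansion: its $m = 0$ contribution is $\calO(N/M)$ by the prime number theorem, and its $m \neq 0$ part falls under the same exponential sum analysis as the main part. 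After passing from the $\log(p)$-weighted prime sum to the $\Lambda$-weighted sum (with an $\calO(\sqrt{N} \log N)$ error from prime powers) and breaking $[1, N]$ into dyadic blocks, each inner sum is precisely of the form $S(X, X')$ estimated in Theorem~\ref{thm:5}.

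The final step is to insert the Theorem~\ref{thm:5} bounds, sum over dyadic scales $X \leq N$, sum over $0 < |m| \leq M$ using $\sum_m |c_m| |m|^\alpha \lesssim M^\alpha$, and optimize by choosing $M$ as a small power of $N$. The main obstacle is the bookkeeping: each of the three terms in every clause of Theorem~\ref{thm:5}, together with the Vaaler tail $N/M$, yields a linear constraint in $\gamma_1, \gamma_2$, $\log M/\log N$, and $\epsilon$. The inequalities (i)--(iv) in the statement of Theorem~\ref{thm:3} are precisely the conditions on $\gamma_1, \gamma_2$ under which an admissible choice of $M$ exists with a positive $\epsilon$-margin. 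Combined with $\psi(x) \simeq \varphi_2(x)/x$ from \eqref{eq:5} and \eqref{eq:28}, which calibrates the natural size of the main term as $\varphi_2(N)$, this delivers the claimed error $\calO\bigl(\varphi_2(N) N^{-\epsilon}\bigr)$.
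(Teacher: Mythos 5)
Your proposal is correct and follows the same overall strategy as the paper: apply Lemma~\ref{lem:1}, write the indicator as $\psi(p)$ plus a difference of sawtooths, Fourier-approximate the sawtooths, pass to the $\Lambda$-weighted sum, and invoke Theorem~\ref{thm:5}. There are, however, two points where your route differs from the paper's, and the second one is worth being explicit about. First, you use Vaaler's approximation (two trigonometric polynomials $V_M$, $W_M$ with $|\rho - V_M| \le W_M$), whereas the paper uses the classical truncated Fourier series for $\Phi$ together with the error term $\calO\bigl(\min\{1, (M\|x\|)^{-1}\}\bigr)$ expanded via \eqref{eq:15}--\eqref{eq:16}; this is purely stylistic. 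Second, and more substantively, you propose to bound $E_0$ and $E_1$ separately, discarding the cancellation between $\rho(\varphi_1-\psi)$ and $\rho(\varphi_1)$. The paper instead keeps the difference and factors
\[
	e^{-2\pi i m(\varphi_1(n)-\psi(n))} - e^{-2\pi i m\varphi_1(n)} = e^{-2\pi i m\varphi_1(n)}\,\phi_m(n),
	\qquad
	\phi_m(n) = e^{2\pi i m\psi(n)} - 1,
\]
exploiting $|\phi_m(n)| \lesssim |m|\,\varphi_2(n)/n$ via \eqref{eq:22}. One might worry that your separate treatment loses this gain and gives weaker constraints; but it does not, for a structural reason worth noting. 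The tail of either approximation forces $M \gtrsim N^{1+\epsilon}\varphi_2(N)^{-1}$, and with that choice the paper's summation $\sum_{0<|m|\le M} |c_m|\,|\phi_m(N)| \lesssim M\,\varphi_2(N)/N \approx N^{2\epsilon}$ and your summation $\sum_{0<|m|\le M}|c_m| \lesssim \log M$ contribute the same, up to $N^{O(\epsilon)}$ factors absorbed by the hypotheses; the $|m|$-dependent pieces of Theorem~\ref{thm:5} contribute identically as $M^{\alpha}$ in both schemes. So your version works, is arguably cleaner, and yields the same inequalities on $\gamma_1,\gamma_2,\epsilon$. The only small caveat to flag: when you say the $m\neq 0$ part of the $W_M$-error ``falls under the same exponential sum analysis as the main part,'' you can in fact bound $\Lambda(n)\le\log N$ there and use the simpler Proposition~\ref{prop:2} over integers (with $\xi$ retained but no Vaughan decomposition needed), which is what the paper does for $I_2$, $I_3$; invoking Theorem~\ref{thm:5} there would be overkill, though it would still close the argument.
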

\begin{proof}
	We treat $d \geq 10$ only since similar arguments apply to the other cases. Let us introduce the ``sawtooth''
	function $\Phi(x)=\{x\}-1/2$. Notice that, in view of Lemma \ref{lem:1} we have
	\begin{align*}
		\lfloor \varphi_1(n) \rfloor - \lfloor \varphi_1(n) - \psi(n) \rfloor
		=\psi(n)+\Phi\big(\varphi_1(n)-\psi(n) \big)-\Phi\big(\varphi_1(n)\big).
	\end{align*}
	Hence, we may write
	\[
		\sum_{p \in \bfP_N}e^{2\pi i \xi P(p)} \log(p)
		=\sum_{p \in \PP_N} e^{2\pi i \xi P(p)} \log(p) \psi(p)
		+\sum_{p \in \PP_N} e^{2\pi i \xi P(p)} \log(p) 
		\big(\Phi\big(\varphi_1(p)-\psi(p) \big)-\Phi\big(\varphi_1(p)\big)\big).
	\]
	Since
	\[
        \frac{1}{2} - \gamma_2 + 2\epsilon = (1- \gamma_2) - \frac{1}{2} + 2\epsilon < 0,
    \]
	by the prime number theorem we get
	\begin{align*}
		&\sum_{p \in \PP_N} e^{2\pi i \xi P(p)} \log(p)
        \big(\Phi\big(\varphi_1(p)-\psi(p) \big)-\Phi\big(\varphi_1(p)\big)\big) \\
		&\qquad\qquad=
		\sum_{n = 1}^N 
		e^{2\pi i \xi P(n)} \Lambda(n)
        \big(\Phi\big(\varphi_1(n)-\psi(n) \big)-\Phi\big(\varphi_1(n)\big)\big)
		+
		\calO\left(\varphi_2(N) N^{-\epsilon} \right).
	\end{align*}
	Next, we claim that
	\[
		\sum_{n = 1}^N
        e^{2\pi i \xi P(n)} \Lambda(n)
        \big(\Phi\big(\varphi_1(n)-\psi(n) \big)-\Phi\big(\varphi_1(n)\big)\big) 
		= \calO\left(\varphi_2(N) N^{-\epsilon}\right).
	\]
	To see this, let us expand $\Phi$ into its Fourier series, i.e.,
	\begin{align*}
		\Phi(x)=\sum_{0<|m|\le M} \frac{1}{2\pi i m} e^{-2\pi imx}
		+\calO\left(\min\left\{1, \frac{1}{M\|x\|}\right\}\right),
	\end{align*}
	for some $M>0$ where $\|x\| = \min\{\norm{x-n} : n \in \ZZ\}$ is the distance of $x \in \RR$ to the nearest
	integer. Next, we split the resulting sum into three parts,
	\begin{align*}
		I_1&=\sum_{0<|m|\le M} \frac{1}{2\pi i m} 
		\sum_{n=1}^Ne^{2\pi i(\xi P(n) -m\varphi_1(n))} \left(e^{2\pi im\psi(n)} - 1 \right) \Lambda(n),\\
	\intertext{and}
		I_2&=\mathcal O\bigg( \sum_{n=1}^N\min\left\{1, \frac{1}{M\|\varphi_1(n)-\psi(n)\|}\right\} \Lambda(n)\bigg),\\
		I_3&=\mathcal O\bigg( \sum_{n=1}^N\min\left\{1, \frac{1}{M\|\varphi_1(n)\|}\right\} \Lambda(n)\bigg).
	\end{align*}
	In this way, our aim is reduced to showing that each term $I_1, I_2$ and $I_3$ belongs to
	$\calO\big(\varphi_2(N)N^{-\epsilon}\big)$. 
	
	\noindent
	{\bf The estimate for $I_1$.}
	Let $\phi_m(x)=e^{2\pi i m\psi(x)} - 1$. Using \eqref{eq:5}, we easily see that
	\begin{equation}
		\label{eq:22}
		|\phi_m(x)| \lesssim \frac{\abs{m} \varphi_2(x)}{x},\quad\text{and}\quad
		|\phi_m'(x)|\lesssim \frac{\abs{m} \varphi_2(x)}{x^2}.
	\end{equation}
	Let us first estimate the inner sum in $I_1$. By dyadic splitting we get
	\begin{equation}
		\label{eq:34}
		\begin{aligned}
		&
		\Big|\sum_{n = 1}^N 
		\exp\Big(2\pi i\big(\xi P(n) -m\varphi_1(n)\big)\Big) \phi_m(n) \Lambda(n) \Big| \\
		&\qquad\qquad\lesssim
		(\log N)
		\max_{\atop{X < X' \leq 2X}{X' \leq N}}
		\Big|
		\sum_{X < n \leq X'}
		\exp\Big(2\pi i\big(\xi P(n) -m\varphi_1(n)\big)\Big) \phi_m(n) \Lambda(n)
		\Big|.
		\end{aligned}
	\end{equation}
	Now, by the partial summation, we have
	\[
		\Big|
        \sum_{X < n \leq X'} 
		\exp\Big(2\pi i \big(\xi P(n) -m\varphi_1(n)\big)\Big) \phi_m(n) \Lambda(n)
        \Big|
		\leq
		\abs{S(X, X')} \cdot \abs{\phi_m(X')}
		+
		\int_X^{X'}
		\abs{S(X, x)} \cdot \abs{\phi_m'(x)} {\: \rm d}x
	\]
	where
	\[
		S(X, x) = \sum_{X < n \leq x} \exp\Big(2\pi i\big(\xi P(n) -m\varphi_1(n)\big)\Big) \Lambda(n).
	\]
	It follows from Theorem \ref{thm:5}(iv) and estimates \eqref{eq:22} that
	\begin{align*}
		&
		\Big|
        \sum_{X < n \leq X'} \exp\Big(2\pi i \big(\xi P(n) -m\varphi_1(n)\big)\Big) \phi_m(n) \Lambda(n)
        \Big|\\
        &\qquad\lesssim
		\abs{m}
		\max_{X \in [1, N]} X^{\epsilon - \frac{1}{4 d(d+1)}} \varphi_2(X)
		+
		\abs{m}^{1 + \frac{1}{2d (d+1)}}
		\max_{X \in [1, N]}
		X^{\epsilon - \frac{d-1}{4d (d+1)}} \varphi_2(X) \\
		&\qquad\phantom{\lesssim}+
		\abs{m}^{1 + \frac{1}{3d(d+1)}}
		\max_{X \in [1, N]} X^{\epsilon} \varphi_2(X) 
		\big(\varphi_1(X) \sigma_1(X) \big)^{-\frac{2}{3d(d+1)^2}}\\
		&\qquad\lesssim
		\abs{m} N^{\epsilon} \varphi_2(N) \Big(
		N^{-\frac{1}{4 d(d+1)}}
		+
		\abs{m}^{\frac{1}{2d(d+1)}}
		N^{-\frac{d-1}{4d(d+1)}}
		+
		\abs{m}^{\frac{1}{3d(d+1)}}
		\big(\varphi_1(N) \sigma_1(N) \big)^{-\frac{2}{3 d (d+1)^2}} \Big),
	\end{align*}
	and hence by \eqref{eq:34}, for each $\epsilon > 0$,
	\begin{align*}
		&
		\frac{1}{\abs{m}}
		\Big|\sum_{n = 1}^N \exp\Big(2\pi i\big(\xi P(n) -m\varphi_1(n)\big)\Big) \phi_m(n) \Lambda(n) \Big|\\
		&\qquad\qquad\lesssim
		N^{\epsilon} \varphi_2(N) \Big(
		N^{-\frac{1}{4 d(d+1)}}
		+
		\abs{m}^{\frac{1}{2 d(d+1)}}
		N^{-\frac{d-1}{4d (d+1)}}
		+
		\abs{m}^{\frac{1}{3d(d+1)}}
		\big(\varphi_1(N) \sigma_1(N) \big)^{-\frac{2}{3d(d+1)^2}} \Big).
	\end{align*}
	Now, by summing up over $m \in \{1, \ldots, M \}$ we arrive at the conclusion that
	\begin{equation}
		\label{eq:25}
		\abs{I_1} \lesssim 
		M N^{\epsilon} \varphi_2(N) \Big(
		N^{-\frac{1}{4 d(d+1)}}
		+
		M^{\frac{1}{2 d (d+1)}}
		N^{-\frac{d-1}{4d(d+1)}}
		+
		M^{\frac{1}{3d(d+1)}}
		\big(\varphi_1(N) \sigma_1(N) \big)^{-\frac{2}{3d(d+1)^2}}\Big).
	\end{equation}

	\noindent
	{\bf The estimates for $I_2$ and $I_3$.}
	Let us consider $I_2$. Since (see \cite[Section 2]{hb})
	\begin{align}
		\label{eq:15}
		\min\left\{1, \frac{1}{M\|x\|}\right\}=\sum_{m\in\ZZ}c_m e^{2\pi imx}
	\end{align}
	where
	\begin{align}
	\label{eq:16}
		|c_m|\lesssim \min\left\{\frac{\log M}{M}, \frac{1}{|m|}, \frac{M}{|m|^2}\right\},
	\end{align}
	we have
	\begin{align*}
		&\sum_{n=1}^N
        \min\left\{1, \frac{1}{M\|\varphi_1(n)-\psi(n)\|}\right\} \Lambda(n)
        \leq
		(\log N)
		\sum_{m \in \ZZ}
		\abs{c_m}
		\Big|\sum_{n=1}^N e^{2\pi i m(\varphi_1(n)-\psi(n))} \Big| \\
		&\qquad\qquad\lesssim
		\frac{\log M}{M} N (\log N) + 
		(\log N)
		\bigg(\sum_{0 < \abs{m} < M} \frac{1}{\abs{m}} + \sum_{\abs{m} > M} \frac{M}{\abs{m}^2}\bigg)
		\Big|\sum_{n=1}^N e^{2\pi i m(\varphi_1(n)-\psi(n))} \Big|.
	\end{align*}
	By Proposition \ref{prop:2}(i), we get
	\begin{align*}
		\Big|\sum_{n=1}^N e^{2\pi i m(\varphi_1(n)-\psi(n))} \Big|
		&\lesssim
		\abs{m}^{\frac{1}{2}} 
		\sup_{X \in [1, N]}
		X^{1+\frac{1}{2} \epsilon} \big(\varphi_1(X) \sigma_1(X)\big)^{-\frac{1}{2}} \\
		&\lesssim
		\abs{m}^{\frac{1}{2}} N^{1+\frac{1}{2} \epsilon} \big(\varphi_1(N) \sigma_1(N)\big)^{-\frac{1}{2}},
	\end{align*}
	thus
	\begin{equation}
		\label{eq:24}
		\abs{I_2} \lesssim
		M^{-1} (\log M) N^{1+\frac{1}{2} \epsilon}
		+ M^{\frac{1}{2}} N^{1+\epsilon} \big(\varphi_1(N) \sigma_1(N)\big)^{-\frac{1}{2}}.
	\end{equation}
	Arguments similar to the above leads to the same bounds for $I_3$.
	
	\noindent
	{\bf Conclusion.}
	From estimates \eqref{eq:25} and \eqref{eq:24}, we conclude that
	\begin{align*}
		|I_1|+|I_2|+|I_3|
		&\lesssim
		M N^{\epsilon} \varphi_2(N) \Big(
        N^{-\frac{1}{4 d(d+1)}}
        +
        M^{\frac{1}{2d(d+1)}}
        N^{-\frac{d-1}{4d(d+1)}}
        +
        M^{\frac{1}{3d(d+1)}}
        \big(\varphi_1(N) \sigma_1(N) \big)^{-\frac{2}{3d(d+1)^2}} \Big)\\	
		&\phantom{\lesssim}
		+
		M^{-1} (\log M) N^{1+\frac{1}{2}\epsilon}
		+
		M^\frac{1}{2} N^{1+\epsilon} \big(\varphi_1(N) \sigma_1(N)\big)^{-\frac{1}{2}}.
	\end{align*}
	Take $M = N^{1+2\epsilon} \varphi_2(N)^{-1}$. As it may be easily verified, if $\epsilon$
	satisfies \eqref{eq:55} then
	\[
		\frac{1}{2} (1 + 2 \epsilon - \gamma_2) + 1 + \epsilon- \frac{1}{2}\gamma_1 + \frac{1}{2} \epsilon 
		- \gamma_2 + \epsilon 
		\leq
		\frac{3}{2} (1 - \gamma_2) + \frac{1}{2}(1-\gamma_1) - \frac{1}{2} + 4 \epsilon \leq 0,
	\]
	thus
	\[
		M^\frac{1}{2} N^{1+\epsilon} \big(\varphi_1(N) \sigma_1(N)\big)^{-\frac{1}{2}}
		=
		\calO\left(\varphi_2(N) N^{-\epsilon}\right).
	\]
	Since
	\[
        \bigg(\frac{2}{3d(d+1)^2} - 6\epsilon \bigg) \frac{3d(d+1)}{3d(d+1)+1} < \frac{1}{4d(d+1)} - 5 \epsilon
        < \bigg(\frac{d-1}{4d(d+1)} - 5 \epsilon \bigg)
        \frac{2d(d+1)}{2d(d+1)+1},
    \]
	for the other terms, we obtain
	\begin{align*}
		(1 + 2 \epsilon - \gamma_2) + \epsilon -\frac{1}{4 d(d+1)} + \epsilon
		\leq
		(1- \gamma_2) -\frac{1}{4 d(d+1)} + 5 \epsilon \leq 0,
	\end{align*}
	and
	\begin{align*}
		&
		\bigg(1+\frac{1}{2d(d+1)}\bigg)(1+2\epsilon - \gamma_2)
		+\epsilon
		-\frac{d-1}{4d(d+1)}
		+\epsilon \\
		&\qquad\qquad\leq
		\bigg(1+\frac{1}{2d(d+1)}\bigg) (1 - \gamma_2)
		-\frac{d-1}{4d(d+1)}
		+ 5 \epsilon \leq 0.
	\end{align*}
	Finally,	
	\begin{align*}
		&
        \bigg(1+\frac{1}{3d(d+1)}\bigg)(1+2\epsilon - \gamma_2)
		+\epsilon
		-\frac{2}{3d(d+1)^2}\gamma_1
		+\frac{2}{3d(d+1)^2}\epsilon
		+\epsilon \\
		&\qquad\qquad\leq
		\bigg(1+\frac{1}{3d(d+1)}\bigg)(1-\gamma_2) +
		\frac{2}{3d(d+1)^2} (1 - \gamma_1)
		-
		\frac{2}{3d(d+1)^2}
		+ 6 \epsilon \leq 0.
	\end{align*}
	Consequently,
	\[
        \abs{I_1}+\abs{I_2}+\abs{I_3} \lesssim \varphi_2(N) N^{-\epsilon},
	\]
	which completes the proof.
\end{proof}
 
\begin{theorem}
	\label{thm:4}
	For each $\epsilon > 0$, satisfying
	\begin{enumerate}[itemindent=*,leftmargin=0pt]
		\item if $d = 1$,
		\[
			\left\{
			\begin{aligned}
			&(1-\gamma_1) &+ 15&(1-\gamma_2) &+ 164 &\epsilon &< 1,\\
			3 &(1-\gamma_1) &+ 12&(1-\gamma_2) &+ 120 &\epsilon &< 2
			\end{aligned}
			\right.
		\]
		\item if $d = 2$,
		\[
			\left\{
			\begin{aligned}
			3&(1-\gamma_1) &+ 52&(1-\gamma_2) &+ 720&\epsilon &< 3, \\
			4&(1-\gamma_1) &+ 32&(1-\gamma_2) &+ 320&\epsilon &< 3
			\end{aligned}
			\right.
		\]
		\item if $d \in \{3, \ldots, 9\}$,
		\[
			\left\{
			\begin{aligned}
			\frac{1}{3 \cdot 2^d} (1-\gamma_1) + \bigg(1+\frac{1}{6(2^d-1)}\bigg) &(1- \gamma_2)
			+ 12 \epsilon &< \frac{1}{3 \cdot 2^d}, \\
			&(1-\gamma_2) + 8 \epsilon &< \frac{1}{4\cdot 2^d},
			\end{aligned}
			\right.
		\]
		\item if $d \geq 10$,
		\[
			\frac{2}{3d(d+1)^2} (1 -\gamma_1) + \bigg(1+\frac{1}{3d(d+1)}\bigg)(1-\gamma_2) 
			+ 12 \epsilon < \frac{2}{3d(d+1)^2},
		\]
	\end{enumerate}
	we have
	\begin{equation}
		\label{eq:6}
		\sum_{p \in \bfP_N} e^{2\pi i \xi P(p)} \frac{\log (p)}{p \psi(p)}
		=
		\sum_{p \in \PP_N} e^{2 \pi i \xi P(p)} \frac{\log (p) }{p} + \calO\big(N^{-\epsilon}\big),
	\end{equation}
	and
	\begin{equation}
		\label{eq:19}
		\sum_{p \in \bfP_N} e^{2\pi i \xi P(p)} \frac{\log (p)}{\psi(p)}
		=
		\sum_{p \in \PP_N} e^{2\pi i \xi P(p)} \log(p)
		+
		\calO\big(N^{1-\epsilon}\big).
	\end{equation}
\end{theorem}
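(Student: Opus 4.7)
My plan is to derive both identities \eqref{eq:6} and \eqref{eq:19} by adapting the argument of Theorem~\ref{thm:3}, inserting an extra summation by parts to absorb the additional weight. I describe the argument for \eqref{eq:6}; the identity \eqref{eq:19} is analogous, with the weight $1/\psi(p) \simeq p/\varphi_2(p)$ being a factor of $p$ larger than $1/(p\psi(p))$, which accounts for the $\calO(N^{1-\epsilon})$ error in place of $\calO(N^{-\epsilon})$.

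Applying Lemma~\ref{lem:1} as in the proof of Theorem~\ref{thm:3}, I use $\lfloor \varphi_1(p) \rfloor - \lfloor \varphi_1(p) - \psi(p) \rfloor = \psi(p) + \Phi(\varphi_1(p)-\psi(p)) - \Phi(\varphi_1(p))$ with $\Phi(x) = \{x\} - 1/2$. Substituting into the left-hand side of \eqref{eq:6} produces
\[
\sum_{p \in \bfP_N} e^{2\pi i \xi P(p)} \frac{\log(p)}{p\psi(p)} = \sum_{p \in \PP_N} e^{2\pi i \xi P(p)} \frac{\log(p)}{p} + R(N),
\]
where $R(N) := \sum_{p \in \PP_N} e^{2\pi i \xi P(p)} \frac{\log(p)}{p\psi(p)} \bigl(\Phi(\varphi_1(p)-\psi(p)) - \Phi(\varphi_1(p))\bigr)$, and the task reduces to $R(N) = \calO(N^{-\epsilon})$. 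I pass from the prime sum to a sum over $n$ weighted by $\Lambda(n)/(n\psi(n))$ via the prime number theorem, controlling the prime powers $p^k$, $k \geq 2$, using $\gamma_2 > 1/2$ and the rate at which $\sum_{p^k \leq N, k \geq 2} \log(p)/(p^k \psi(p^k))$ converges. I then expand $\Phi$ in its Fourier series truncated at frequency $M$, producing $R(N) = I_1 + \calO(I_2) + \calO(I_3) + \calO(N^{-\epsilon})$ with $I_1, I_2, I_3$ defined exactly as in Theorem~\ref{thm:3}.

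For $I_1$, I dyadically split the $n$-sum and apply Abel summation against the smooth weight $\phi_m(n)/(n\psi(n))$, where $\phi_m(x) = e^{2\pi i m\psi(x)} - 1$ satisfies $|\phi_m(t)/(t\psi(t))| \lesssim |m|/t$ and $|(\phi_m(t)/(t\psi(t)))'| \lesssim |m|/t^2$. Each dyadic block at scale $X$ is reduced to $|m| X^\epsilon$ times the decay supplied by Theorem~\ref{thm:5}; taking the maximum block at $X \simeq N$ and summing over $|m| \leq M$ with weight $1/|m|$ yields, for example when $d \geq 10$,
\[
|I_1| \lesssim M N^\epsilon \Big( N^{-\frac{1}{4 d (d+1)}} + M^{\frac{1}{2 d (d+1)}} N^{-\frac{d-1}{4 d (d+1)}} + M^{\frac{1}{3 d (d+1)}}(\varphi_1(N)\sigma_1(N))^{-\frac{2}{3 d (d+1)^2}} \Big).
\]
For $I_2$ and $I_3$, I use the Fourier expansion~\eqref{eq:15}--\eqref{eq:16} of the min function, bound the resulting exponential sums via Proposition~\ref{prop:2}, and apply another Abel summation to absorb the weight $1/(n\psi(n))$, obtaining a bound of the shape $M^{-1}(\log M) N^{1-\gamma_2+\epsilon} + M^{1/2} N^{1-\gamma_2-\gamma_1/2+\epsilon}$.

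Choosing $M \simeq N^{1+2\epsilon}/\varphi_2(N)$ as in the proof of Theorem~\ref{thm:3} and verifying the exponent arithmetic, every term becomes $\calO(N^{-\epsilon})$ precisely when $\epsilon$ satisfies the conditions stated in the theorem. The salient feature is the roughly \emph{doubled} $\epsilon$-coefficients compared with Theorem~\ref{thm:3}: we now demand the absolute error $N^{-\epsilon}$ rather than the relative error $\varphi_2(N) N^{-\epsilon}$, the factor $\varphi_2(N)$ having been swallowed by the new weight $1/(p\psi(p)) \simeq 1/\varphi_2(p)$. The main technical obstacle is the bookkeeping for this extra Abel step combined with a careful treatment of the prime-power contribution, whose naive bound $\calO(1)$ is insufficient and must be sharpened to $\calO(N^{-\epsilon})$ using the rate at which the infinite tail converges.
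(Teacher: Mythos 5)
Your proposal takes a genuinely different route from the paper, and it has a gap at a critical step. The paper's proof of Theorem~\ref{thm:4} is a short reduction to Theorem~\ref{thm:3}: setting $S(N)=\sum_{p\in\bfP_N}e^{2\pi i\xi P(p)}\log(p)$ and $U(N)=\sum_{p\in\PP_N}e^{2\pi i\xi P(p)}\log(p)\psi(p)$, Theorem~\ref{thm:3} applied with $2\epsilon$ gives $|S(n)-U(n)|\lesssim\varphi_2(n)n^{-2\epsilon}$; one then performs Abel summation on both sides of \eqref{eq:6} against the weight $1/(n\psi(n))$ and compares. Since $1/(n\psi(n))\lesssim 1/(\varphi_2(n)\sigma_2(n))$ and its first difference is $\lesssim 1/(n\varphi_2(n)\sigma_2(n))$, the quantity $\varphi_2(n)$ cancels and $\sigma_2(n)^{-1}\lesssim n^{\epsilon}$ eats one of the two powers of $n^{-\epsilon}$, which is exactly why the $\epsilon$-coefficients in Theorem~\ref{thm:4} are (essentially) twice those in Theorem~\ref{thm:3}. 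The whole argument is a few lines and never touches von Mangoldt, Vaughan, or the Fourier expansion of $\Phi$ again.

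Your plan instead reruns the Theorem~\ref{thm:3} machinery from scratch with the weight $1/(n\psi(n))$ carried along, and this is where the gap appears. To bring Vaughan's identity and Theorem~\ref{thm:5} into play you must replace the prime sum by a $\Lambda(n)$-weighted sum, and the resulting prime-power discrepancy is
\[
\sum_{\substack{n=p^k\le N\\ k\ge 2}}\frac{\Lambda(n)}{n\,\psi(n)}\,\big|\Phi(\varphi_1(n)-\psi(n))-\Phi(\varphi_1(n))\big|
\;\gtrsim\;\sum_{\substack{p^k\le N\\ k\ge 2}}\frac{\log p}{p^{k}},
\]
since the typical size of $\Phi(\varphi_1(n)-\psi(n))-\Phi(\varphi_1(n))$ is $\simeq\psi(n)$. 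This series \emph{converges to a positive constant}; the truncated sum is $\calO(1)$ but not $o(1)$, let alone $\calO(N^{-\epsilon})$. Your remark that it ``must be sharpened to $\calO(N^{-\epsilon})$ using the rate at which the infinite tail converges'' misdiagnoses the problem: the rate of convergence controls only $\sum_{p^k>N}$, while the obstruction is the nonvanishing head. For $\xi=0$ the error converges to a strictly positive number, so no cancellation rescues the estimate uniformly in $\xi$. In the paper's version of this step (inside the proof of Theorem~\ref{thm:3}), the prime-power error is only $\calO(\sqrt{N}\log N)$ against a target of $\calO(\varphi_2(N)N^{-\epsilon})$, which is fine precisely because $\gamma_2>1/2+2\epsilon$; once you put the weight $1/(n\psi(n))\simeq\varphi_2(n)^{-1}$ in before the passage to $\Lambda$, you destroy that margin. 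The fix is to reorder the two steps as the paper does: pass to $\Lambda$ and run the exponential-sum machinery at the level of the \emph{unweighted} sums (this is Theorem~\ref{thm:3}), and only then apply Abel summation with the weight $1/(n\psi(n))$, which involves only true primes and hence no prime-power term at all.
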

\begin{proof}
	Set
	\[
		S(N) = \sum_{p \in \bfP_N} e^{2 \pi i \xi P(p)} \log(p),\qquad\text{and}\qquad
		U(N) = \sum_{p \in \PP_N} e^{2 \pi i \xi P(p)} \log(p)\psi(p).
	\]
	In view of Theorem \ref{thm:3},
	\begin{equation}
		\label{eq:33}
		\big| S(N) - U(N) \big|
		\lesssim
		C \varphi_2(N) N^{-2 \epsilon}.
	\end{equation}
	Notice that by the partial summation we have
	\begin{align}
		\nonumber
		\sum_{p \in \bfP_N} e^{2 \pi i \xi P(p)} \frac{\log(p)}{p \psi(p)}
		&=
		\sum_{n=2}^N \frac{1}{n \psi(n)} (S(n) - S(n-1)) \\
		\label{eq:30}
		&=
		\frac{S(N)}{N\psi(N)}
		+ \sum_{n = 2}^{N-1} \bigg(\frac{1}{n \psi(n)} - \frac{1}{(n+1) \psi(n+1)} \bigg) S(n).
	\end{align}
	Similarly, we get
	\begin{align}
		\nonumber
		\sum_{p \in \PP_N} e^{2\pi i \xi P(p)} \frac{\log(p)}{p} 
		&= \sum_{n = 2}^N \frac{1}{n \psi(n)} \big(U(n) - U(n-1)\big) \\
		\label{eq:29}
		&=
		\frac{U(N)}{N\psi(N)}
		+ \sum_{n = 2}^{N-1} \bigg(\frac{1}{n \psi(n)} - \frac{1}{(n+1) \psi(n+1)}\bigg) U(n).
	\end{align}
	Therefore, by subtracting \eqref{eq:29} from \eqref{eq:30}, we arrive at the conclusion that
	\begin{align*}
		&
		\bigg|
		\sum_{p \in \bfP_N} e^{2\pi i \xi P(p)} \frac{\log(p)}{p \psi(p)} - 
		\sum_{p \in \PP_N} e^{2 \pi i \xi P(p)} \frac{\log (p)}{p}
		\bigg| \\
		&\qquad\qquad\lesssim
		\frac{\abs{S(N) - U(N)}}{N\psi(N)} 
		+
		\sum_{n = 2}^{N-1}\bigg|\frac{1}{n\psi(n)} - \frac{1}{(n+1)\psi(n+1)}\bigg| 
		\cdot \abs{S(n) - U(n)}.
	\end{align*}
	Since, by \eqref{eq:5} and \eqref{eq:28},
	\[
		\frac{1}{N \psi(N)} \lesssim \frac{1}{\varphi_2(N) \sigma_2(N)},
	\]
	and
	\begin{align*}
		\bigg|\frac{1}{(n+1)\psi(n+1)} - \frac{1}{n \psi(n)} \bigg|
		&\leq
		\sup_{x \in [n, n+1]} \bigg|\frac{1}{x^2 \psi(x)} + \frac{\psi'(x)}{x \psi(x)^2}\bigg| \\
		&\lesssim
		\frac{1}{n \varphi_2(n) \sigma_2(n)},
	\end{align*}
	the estimate \eqref{eq:33} gives
	\[
		\frac{\abs{S(N) - U(N)}}{N\psi(N)}
		\lesssim
		N^{-\epsilon},
	\]
	and
	\[
		\bigg|\frac{1}{n\psi(n)} - \frac{1}{(n+1)\psi(n+1)}\bigg|
        \cdot \abs{S(n) - U(n)}
		\lesssim
		n^{-1-\epsilon}.
	\]
	Hence,
	\[
		\bigg|
        \sum_{p \in \bfP_N} e^{2\pi i \xi P(p)} \frac{\log(p)}{p \psi(p)} -
        \sum_{p \in \PP_N} e^{2 \pi i \xi P(p)} \frac{\log(p)}{p}
        \bigg|
		\lesssim N^{-\epsilon} + \sum_{n = 2}^{N-1} n^{-1-\epsilon} \lesssim N^{-\epsilon},
	\]
	which concludes the proof of \eqref{eq:6}. Similar considerations apply to \eqref{eq:19}.
\end{proof}

The following theorem generalizes the results obtained in \cite{leit, psch} and \cite{m3}.
\begin{theorem}
	\label{thm:7}
	\[
		\abs{\bfP_N} = \bigg( \int_2^N \frac{\psi(x)}{\log(x)} {\: \rm d} x \bigg)
		\big(1+o(1)\big).
	\]
\end{theorem}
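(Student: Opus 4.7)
The plan is to apply Theorem~\ref{thm:3} with $\xi = 0$, so that $e^{2\pi i \xi P(p)} = 1$, and then to extract the counting asymptotic via two rounds of Abel summation combined with the classical prime number theorem. Taking $\xi = 0$ in Theorem~\ref{thm:3} will give
\[
	\theta_\bfP(N) := \sum_{p \in \bfP_N} \log(p) = \sum_{p \in \PP_N} \log(p)\, \psi(p) + \calO\big(\varphi_2(N) N^{-\epsilon}\big),
\]
and the rest of the argument will only use this identity together with the regularity of $\psi$ coming from \eqref{eq:5} and \eqref{eq:28}.

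First I would evaluate $\sum_{p \in \PP_N} \log(p)\psi(p)$. By Abel summation and the PNT in the form $\theta(x) := \sum_{p \leq x} \log(p) = x + E(x)$ with $E(x) = \calO\big(x e^{-c\sqrt{\log x}}\big)$, followed by an integration by parts, one obtains
\[
	\sum_{p \in \PP_N} \log(p)\psi(p) = \int_2^N \psi(x) {\: \rm d} x + \psi(N) E(N) - \int_2^N E(x) \psi'(x) {\: \rm d} x + \calO(1).
\]
Using \eqref{eq:5} and \eqref{eq:28} one has $\psi(x) \simeq \varphi_2(x)/x$ and $\psi'(x) \simeq \varphi_2(x)\sigma_2(x)/x^2$ up to slowly varying factors, so both error contributions are $o(\varphi_2(N))$, whereas $\int_2^N \psi(x) {\: \rm d} x \simeq \varphi_2(N)$. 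Together with the error in Theorem~\ref{thm:3}, this yields $\theta_\bfP(N) = \int_2^N \psi(x) {\: \rm d} x + o\big(\varphi_2(N)\big)$.

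A second round of Abel summation converts $\theta_\bfP$ into the counting function:
\[
	\abs{\bfP_N} = \frac{\theta_\bfP(N)}{\log N} + \int_2^N \frac{\theta_\bfP(x)}{x \log^2 x} {\: \rm d} x,
\]
while integration by parts delivers the parallel representation
\[
	\int_2^N \frac{\psi(x)}{\log x} {\: \rm d} x = \frac{1}{\log N} \int_2^N \psi(t) {\: \rm d} t + \int_2^N \frac{1}{x \log^2 x} \bigg( \int_2^x \psi(t) {\: \rm d} t \bigg) {\: \rm d} x.
\]
Subtracting, the discrepancy $\abs{\bfP_N} - \int_2^N \psi(x)/\log(x) {\: \rm d} x$ is bounded in terms of $\theta_\bfP(x) - \int_2^x \psi(t) {\: \rm d} t = o(\varphi_2(x))$ weighted against the harmless kernels $1/\log N$ and $1/(x \log^2 x)$.

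The main obstacle will be verifying that these error contributions are genuinely of smaller order than the target main term. Since $\psi(x) \simeq \varphi_2(x)/x$ up to slowly varying factors, Karamata's theorem for integrals of regularly varying functions gives $\int_2^N \psi(x)/\log(x) {\: \rm d} x \simeq \varphi_2(N)/\log N$ and likewise $\int_2^N \varphi_2(x)/(x \log^2 x) {\: \rm d} x \simeq \varphi_2(N)/\log N$. Hence every error term collected above is $o(\varphi_2(N)/\log N)$, i.e.\ of smaller order than the main term, which will complete the proof.
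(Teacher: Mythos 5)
Your proposal is correct and follows essentially the same route as the paper: both take $\xi = 0$ in Theorem~\ref{thm:3} to relate $\sum_{p\in\bfP_N}\log p$ to $\sum_{p\in\PP_N}\log(p)\psi(p)$, then perform two rounds of Abel summation together with the prime number theorem to convert logarithmic weights into counting and to replace sums by integrals, and finally compare the accumulated error against the size of $\int_2^N \psi(x)/\log(x)\,{\rm d}x$. The only (minor) differences are organizational — you invoke PNT before introducing the $1/\log$ weight while the paper does it in the opposite order, you work with continuous integration by parts rather than discrete partial summation, and you appeal to Karamata's theorem where the paper proves a direct lower bound $\int_2^N \psi(x)/\log(x)\,{\rm d}x \gtrsim \varphi_2(N)/\log N$ (the latter is cleaner in the borderline case $\gamma_2 = 1$, since Karamata for index $-1$ requires extra care, though only the lower bound is actually needed).
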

\begin{proof}
	Set
	\[
		W_N = \sum_{p \in \bfP_N} \log(p), \qquad\text{and}\qquad
		V_N = \sum_{p \in \PP_N} \log(p) \psi(p).
	\]
	Let $\epsilon$ satisfy hypotheses of Theorem \ref{thm:3}, then
	\begin{equation}
		\label{eq:41}
		W_N = V_N + \calO\big(\varphi_2(N) N^{-\epsilon}\big).
	\end{equation}
	By the partial summation we have
	\begin{align*}
		\abs{\bfP_N} &= \sum_{n = 2}^N \big(W_n - W_{n-1} \big)\frac{1}{\log (n)} \\ 
		&= W_N \frac{1}{\log N} + \sum_{n = 2}^{N-1} W_n \bigg(\frac{1}{\log(n)} - \frac{1}{\log (n+1)}\bigg),
	\end{align*}
	and
	\[
		\sum_{p \in \PP_N} \psi(p)
		=
		V_N \frac{1}{\log N} + \sum_{n = 2}^{N-1} V_n \bigg(\frac{1}{\log(n)} - \frac{1}{\log (n+1)}\bigg).
	\]
	Therefore, by \eqref{eq:41}, we obtain
	\begin{align*}
		\Big|
		\abs{\bfP_N} - \sum_{p \in \PP_N} \psi(p)
		\Big|
		&\leq
		\big|W_N - V_N\big| \frac{1}{\log N} + \sum_{n = 2}^{N-1} \big|W_n - V_n\big| 
		\bigg(\frac{1}{\log(n)} - \frac{1}{\log(n+1)}\bigg)\\
		&\lesssim
		\varphi_2(N) N^{-\epsilon} + \sum_{n=2}^{N-1} \varphi_2(n) n^{-1-\epsilon}, 
	\end{align*}
	and thus
	\[
		\abs{\bfP_N} = \sum_{p \in \PP_N} \psi(p) + \calO\big(\varphi_2(N) N^{-\epsilon}\big).
	\]
	Setting
	\[
		\vartheta(N) = \sum_{p \in \PP_N} \log (p),
	\]
	by the summation by parts, we obtain
	\begin{align*}
		\sum_{p \in \PP_N} \psi(p) 
		&= \sum_{n = 2}^N \big(\vartheta(n) - \vartheta(n-1)\big) \frac{\psi(n)}{\log(n)} \\
		&= \vartheta(N) \frac{\psi(N)}{\log N} 
		- \vartheta(2) \frac{2}{\log 2}
		+ \sum_{n = 2}^{N-1} \vartheta(n) 
		\bigg(\frac{\psi(n)}{\log n} - \frac{\psi(n+1)}{\log(n+1)} \bigg),
	\end{align*}
	and 
	\begin{align*}
		\sum_{n = 2}^N  \frac{\psi(n)}{\log n}
        = N \frac{\psi(N)}{\log N} 
		- 2 \frac{\psi(2)}{\log 2}
		+ \sum_{n = 2}^{N-1} n 
		\bigg(\frac{\psi(n)}{\log n} - \frac{\psi(n+1)}{\log(n+1)} \bigg).
	\end{align*}
	The prime number theorem implies that
	\begin{equation}
		\label{eq:42}
		\vartheta(N) = N\Big(1+\calO\big(N^{-2 \epsilon}\big)\Big).
	\end{equation}
	Moreover, by \eqref{eq:5} and \eqref{eq:28},
	\begin{align*}
		\bigg|\frac{\psi(n)}{\log(n)} - \frac{\psi(n+1)}{\log(n+1)}\bigg|
		&\leq
		\sup_{x \in [n, n+1]}
		\bigg|\frac{\psi'(x) \log(x) - \psi(x) x^{-1}}{(\log(n))^2}\bigg| \\
		&\lesssim
		\varphi_2(n) n^{-2 +\epsilon}.
	\end{align*}
	Hence,
	\begin{align*}
		\bigg|
		\sum_{p \in \PP_N} \psi(p) - \sum_{n = 2}^N  \frac{\psi(n)}{\log n}
		\bigg|
		&\leq
		\big|\vartheta(N) - N\big|\frac{\psi(N)}{\log N} + 
		\sum_{n = 2}^{N-1} \big|\vartheta(n) - n\big| \bigg|\frac{\psi(n)}{\log(n)} - \frac{\psi(n+1)}{\log(n+1)}\bigg|\\
		&\lesssim
		\varphi_2(N) N^{-\epsilon}.
	\end{align*}
	Finally, 
	\begin{align*}
		\bigg|
		\sum_{n = 2}^{N-1} \frac{\psi(n)}{\log(n)} - \int_2^N \frac{\psi(x)}{\log(x)} {\: \rm d}x \bigg|
		&\lesssim
		\sum_{n = 2}^{N-1} \int_0^1 \bigg|\frac{\psi(n)}{\log n} - \frac{\psi(n+t)}{\log(n+t)}\bigg| {\: \rm d}t \\
		&\lesssim
		\sum_{n = 2}^{N-1} \vphi_2(n) n ^{-2 + \epsilon}
	\end{align*}
	Thus
	\[
		\sum_{n = 2}^N \frac{\psi(n)}{\log(n)} = \int_2^N \frac{\psi(x)}{\log(x)} {\: \rm d}x
		+\calO\big(\vphi_2(N) N^{-1+\epsilon}\big)
	\]
	Now, using \eqref{eq:5}, we get
	\begin{align*}
		\int_2^N \frac{\psi(x)}{\log(x)} {\: \rm d}x 
		&\geq
		\frac{1}{\log N} \int_2^N \psi(x) {\: \rm d}x \\
		&\gtrsim
		\frac{1}{\log N} \int_2^N \vphi_2'(x) {\: \rm d}x\\
		&\gtrsim
		\frac{\varphi_2(N)}{\log(N)},
	\end{align*}
	which completes the proof.
\end{proof}

\section{Variational estimates}
To deal with $r$-variational estimates for averaging operators and truncated discrete Hilbert transform, we apply the
method used in \cite{zk} and \cite[Section 4]{mtz}. For $\rho \in (0, 1)$ we set
$Z_\rho = \big\{\lfloor 2^{k^\rho} \rfloor : k \in \NN \big\}$ and 
define long $r$-variations by
\[
	V_r^L(a_n : n \in \NN) = V_r(a_n : n \in Z_\rho).
\]
Then the corresponding short variations are given by
\[
	V_r^S(a_n : n \in \NN) = \Big(\sum_{k \geq 1} V_r\big(a_n : n \in [N_{k-1}, N_k)\big)^r \Big)^{\frac{1}{r}},
\]
where $N_k = \lfloor 2^{k^\rho}\rfloor$. Observe that
\[
	V_r(a_n : n \in \NN) \lesssim V_r^L(a_n : n \in \NN) + V_r^S(a_n : n \in \NN).
\]
\subsection{Averaging operators}
In this section we prove Theorem \ref{thm:1} for the model dynamical system. 

Given a function $f$ on $\ZZ$ we set
\[
	\calA_N f(x) = \frac{1}{\abs{\bfP_N}} \sum_{p \in \bfP_N} f\big(x - P(p)\big).
\]
While studying $r$-variations we may replace the operators $\calA_N$ by the weighted averages $\calM_N$,
\[
	\calM_N f(x) = \frac{1}{\Psi_N} \sum_{p \in \bfP_N} f\big(x - P(p) \big) \frac{\log p}{\psi(p)},
\]
where
\[
	\Psi_N = \sum_{p \in \bfP_N} \frac{\log(p)}{\psi(p)}.
\]
Indeed, since $\psi$ is decreasing the ratio of weights in $\calA_N$ and $\calM_N$ is monotonically decreasing, thus by 
\cite[Proposition 5.2]{mtz}, there is $C > 0$ such that for all $r > 2$,
\[
	V_r(\calA_N f(x) : N \in \NN) \leq C \cdot V_r(\calM_N f(x) : N \in \NN),
\]
where the constant $C$ is independent of $f$, $x$ and $r$. Therefore, it is enough to show the following theorem.
\begin{theorem}
	\label{thm:6}
	For each $s > 1$ there is $C_s > 0$ such that for all $r > 2$ and $f \in \ell^s(\ZZ)$,
	\[
		\big\|
		V_r(\calM_N f : N \in \NN)
		\big\|_{\ell^s}
		\leq
		C_s \frac{r}{r-2}
		\|f\|_{\ell^s}.
	\]
\end{theorem}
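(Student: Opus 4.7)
My plan is to follow the short/long variation decomposition sketched in the introduction. Fix $\rho \in (0,1)$, set $N_k = \lfloor 2^{k^\rho}\rfloor$, and use
\[
    V_r(\calM_N f : N \in \NN) \le V_r^L(\calM_N f : N \in \NN) + V_r^S(\calM_N f : N \in \NN).
\]
The short part is reduced to $\ell^1$ kernel estimates using Theorem \ref{thm:7} and Mertens's theorem; the long part is reduced via Theorem \ref{thm:4} to the analogous operator over all primes, which is handled by \cite[Theorem C]{tr1}.

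\textbf{Short variations.} Using the crude estimate $V_r \le V_1$ on each block $[N_{k-1}, N_k)$, the task reduces to controlling $\sum_{N_{k-1} \le n < N_k} \|\calM_{n+1} f - \calM_n f\|_{\ell^s}$. Writing $\calM_{n+1} - \calM_n$ as a convolution, and splitting into (a) the contribution from $1/\Psi_{n+1} - 1/\Psi_n$ acting on the previous average and (b) the new term at $n+1$ (nonzero only when $n+1 \in \bfP$ and carrying weight $\log(n+1)/\psi(n+1)$), Theorem \ref{thm:7} together with the resulting asymptotic $\Psi_N \simeq N$ yields adequate $\ell^1$-kernel bounds for each piece. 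Summing these bounds over the block, whose length is $N_k - N_{k-1} \simeq k^{\rho-1} N_k/\log 2$, and choosing $\rho$ sufficiently close to $1$, one obtains $\|V_r^S(\calM_N f)\|_{\ell^s} \lesssim \|f\|_{\ell^s}$ uniformly in $r > 2$ and for all $s > 1$.

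\textbf{Long variations via comparison.} Introduce the dense-prime analogue
\[
    \widetilde{\calM}_N f(x) = \frac{1}{\vartheta(N)} \sum_{p \in \PP_N} f(x - P(p)) \log(p), \qquad \vartheta(N) = \sum_{p \in \PP_N} \log(p),
\]
for which \cite[Theorem C]{tr1} provides $\|V_r(\widetilde{\calM}_N f : N \in \NN)\|_{\ell^s} \le C_s \frac{r}{r-2}\|f\|_{\ell^s}$. It therefore suffices to bound $V_r^L$ of $D_N f := \calM_N f - \widetilde{\calM}_N f$. By Theorem \ref{thm:4}, specifically \eqref{eq:19}, together with $\Psi_N \simeq \vartheta(N) \simeq N$, the Fourier multiplier of $D_N$ is uniformly $O(N^{-\epsilon})$ on $\TT$, hence Plancherel gives $\|D_N f\|_{\ell^2} \lesssim N^{-\epsilon}\|f\|_{\ell^2}$. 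Since $V_r^L(D_N f)(x) \le \sum_{k \ge 1} |D_{N_k}f(x) - D_{N_{k-1}}f(x)|$, the geometric summability of $N_k^{-\epsilon}$ along $Z_\rho$ yields $\|V_r^L(D_N f)\|_{\ell^2} \lesssim \|f\|_{\ell^2}$. Combining this with a trivial $\ell^\infty$ bound on $V_r^L(D_N f)$ coming from $\|D_N\|_{\ell^\infty \to \ell^\infty} \lesssim 1$ and interpolating gives the corresponding $\ell^s$ estimate for all $s > 1$. Adding this to \cite[Theorem C]{tr1} closes the long-variation half.

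\textbf{Main obstacle.} The delicate point is the $\ell^s$ control of $V_r^L(D_N f)$ for $s \ne 2$: only the $\ell^2$ decay is available quantitatively, and the interpolation with $\ell^\infty$ must be performed so as not to spoil the dependence $r/(r-2)$. This is arranged by pairing the $r$-free $\ell^2$ estimate for the difference $D_N$ with the $\ell^s$ variational bound for $\widetilde{\calM}_N$ (which already carries the factor $r/(r-2)$). The short-variation analysis is conceptually routine once $\rho$ is chosen correctly, but the bookkeeping needed to ensure uniformity in both $r$ and $s$ requires some care, particularly in tracking the implicit constants coming from Theorem \ref{thm:7}.
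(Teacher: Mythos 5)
Your overall decomposition into short and long variations over $Z_\rho$, with the comparison to the analogous operator over all primes followed by an appeal to \cite[Theorem C]{tr1}, matches the paper's strategy. However, there are two real problems in your execution.

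In the long-variation step, the interpolation must be carried out on the individual differences $D_N = \calM_N - \widetilde\calM_N$, not on $V_r^L(D_N f)$. There is no finite $\ell^\infty$ bound on $V_r^L(D_N f)$: the best trivial estimate is $V_r^L(D_N f) \le 2\sum_{N\in Z_\rho}\|D_N f\|_{\ell^\infty}$ and each summand is only $O(\|f\|_{\ell^\infty})$, so the right-hand side diverges. One must instead interpolate $\|D_N f\|_{\ell^2}\lesssim N^{-\epsilon}\|f\|_{\ell^2}$ against the trivial $\ell^s$ bound $\|D_N f\|_{\ell^s}\lesssim\|f\|_{\ell^s}$ (both $\calM_N$ and $\widetilde\calM_N$ are positive averages, hence uniformly $\ell^s$-bounded for every $s\ge1$) to obtain $\|D_N f\|_{\ell^s}\lesssim N^{-\delta_s}\|f\|_{\ell^s}$ with $\delta_s>0$, and only then sum over $N\in Z_\rho$. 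This also repairs a secondary defect: interpolating between $\ell^2$ and $\ell^\infty$ can reach only $s\in[2,\infty)$, whereas you need all $s>1$.

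In the short-variation step the constraint on $\rho$ goes the opposite way from what you wrote. After using $V_r \le V_1$ on each block and Minkowski with exponent $u=\min\{2,s\}$, the block bound $\|\sum_{n=N_{k-1}}^{N_k-1}|m_{n+1}-m_n|\|_{\ell^1}\lesssim k^{\rho-1}$ leads to $\sum_k k^{(\rho-1)u}$, which converges precisely when $\rho<(u-1)/u\le\tfrac12$. Thus $\rho$ must be chosen small (and $s$-dependent as $s\downarrow1$), not close to $1$. Also, the asymptotic $\Psi_N\simeq N$ is most directly obtained from \eqref{eq:19} at $\xi=0$ combined with \eqref{eq:42}, rather than from Theorem \ref{thm:7} (which concerns $|\bfP_N|$, not $\Psi_N$) and Mertens; this last point is a matter of routing rather than a gap.
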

\begin{proof}
	We start with short variations. Let us denote by $m_n$ the convolution kernel corresponding to $\calM_n$. Then
	for each $x \in \bfP_{N_{k-1}}$,
	\[
		\sum_{n = N_{k-1}}^{N_k-1} \abs{m_{n+1}(x) - m_n(x)}
		=
		\big(\Psi_{N_{k-1}}^{-1} - \Psi_{N_k}^{-1}\big) \frac{\log x}{\psi(x)}.
	\]
	On the other hand, for $x \in \bfP_{N_k} \setminus \bfP_{N_{k-1}}$,
	\[
		\sum_{n = N_{k-1}}^{N_k-1} \abs{m_{n+1}(x) - m_n(x)}
		\leq
		2 \Psi_{N_{k-1}}^{-1} \frac{\log x}{\psi(x)}.
	\]
	Therefore,
	\[
		\Big\|
        \sum_{n = N_{k-1}}^{N_k-1} \abs{m_{n+1}-m_n}
        \Big\|_{\ell^1}
		\leq
		\big(\Psi_{N_{k-1}}^{-1} - \Psi_{N_k}^{-1}\big) \Psi_{N_{k-1}}
		+
		2 \Psi_{N_{k-1}}^{-1} \big( \Psi_{N_{k-1}} - \Psi_{N_k} \big).
	\]
	Let $\epsilon > 0$ satisfy the hypotheses of Theorem \ref{thm:4}. By \eqref{eq:42}
	and \eqref{eq:19}, we get
	\[
		\Psi_N = \vartheta(N) + \calO\big(N^{1-\epsilon}\big) = N + \calO\big(N^{1-\epsilon}\big),
	\]
	and thus
	\begin{align*}
		\Psi_{N_k} - \Psi_{N_{k-1}} 
		\lesssim N_k - N_{k-1} + N_{k-1}^{1-\epsilon}
		\lesssim k^{\rho-1} N_{k-1} \lesssim k^{\rho-1} \Psi_{N_{k-1}}.
	\end{align*}
	Therefore, by Young's inequality,
	\begin{align}
		\nonumber
		\Big\|
		\sum_{n = N_{k-1}}^{N_k-1} \big|\calM_{n+1} f - \calM_n f \big|
		\Big\|_{\ell^s}
		&\leq
		\Big\|
		\sum_{n = N_{k-1}}^{N_k-1} \abs{m_{n+1}-m_n}
		\Big\|_{\ell^1}
		\|f\|_{\ell^s} \\
		\label{eq:23}
		&\lesssim
		k^{\rho-1} \|f\|_{\ell^s}.
	\end{align}
	Let $u = \min\{2, s\}$. By monotonicity and Minkowski's inequality, we get
	\begin{align*}
		\Big\|
		\Big(\sum_{k \geq 1} V_r\big(\calM_n f : n \in [N_{k-1}, N_k) \big)^r
		\Big)^{\frac{1}{r}}
		\Big\|_{\ell^s}
		\leq
		\Big\|
		\Big(
		\sum_{k \geq 1}
		\Big(
		\sum_{n = N_{k-1}}^{N_k-1} \big|\calM_{n+1} f - \calM_n f \big|
		\Big)^u
		\Big)^{\frac{1}{u}}
		\Big\|_{\ell^s} \\
		\leq
		\Big(\sum_{k \geq 1} 
		\Big\|
		\sum_{n = N_{k-1}}^{N_k-1} \big|\calM_{n+1} f - \calM_n f \big|
		\Big\|_{\ell^s}^u
		\Big)^{\frac{1}{u}},
	\end{align*}
	which together with \eqref{eq:23} gives
	\[
		\Big\|
        \Big(\sum_{k \geq 1} V_r\big(\calM_n f : n \in [N_{k-1}, N_k) \big)^r
        \Big)^{\frac{1}{r}}
        \Big\|_{\ell^s}
        \lesssim
		\Big(\sum_{k \geq 1} k^{-u(1-\rho)} \Big)^{\frac{1}{u}} \|f\|_{\ell^s}.
	\]
	We notice that the last sum is finite whenever $0 < \rho < \frac{u-1}{u}$.
	
	To control long $r$-variations over the set $Z_\rho$, for any $\rho \in (0, 1)$, we replace
	$\calM_N$ by a weighted average over prime numbers
	\[
		M_N f(x) = \frac{1}{\vartheta(N)} \sum_{p \in \PP_N} f\big(x - P(p)\big) \log(p).
	\]
	Since both $\calM_N$ and $M_N$ are averaging operators, we have
	\begin{equation}
		\label{eq:17}
		\big\|\calM_N f - M_N f\big\|_{\ell^s} 
		\leq \big\|\calM_N f\|_{\ell^s} + \big\|M_N f\big\|_{\ell^s}
		\leq 2 \|f\|_{\ell^s}.
	\end{equation}
	On the other hand, by Plancherel's Theorem 
	\[
		\big\|\calM_N f - M_N f\big\|_{\ell^2} \leq
		\sup_{\xi \in [0, 1]} \bigg| 
		\frac{1}{\Psi_N} \sum_{p \in \bfP_N} e^{2\pi i \xi P(p)} \frac{\log(p)}{\psi(p)}
		- \frac{1}{\vartheta(N)} \sum_{p \in \PP_N} e^{2\pi i \xi P(p)} \log(p) \bigg| \cdot \|f\|_{\ell^2},
	\]
	which together with \eqref{eq:19} and \eqref{eq:42}, implies that there is $\delta > 0$, such that
	\begin{equation}
		\label{eq:18}
		\big\|\calM_N f - M_N f\big\|_{\ell^2} \lesssim N^{-\delta} \|f\|_{\ell^2}.
	\end{equation}
	Now, interpolating between \eqref{eq:17} and \eqref{eq:18}, one can find $\delta_s > 0$ such that
	\[
		\big\|\calM_N f - M_N f\big\|_{\ell^s} \lesssim N^{-\delta_s} \|f\|_{\ell^s}.
	\]
	Hence,
	\begin{align*}
		\big\|V_r\big(\calM_N f - M_N f : N \in Z_\rho \big)\big\|_{\ell^s}
		&\lesssim
		\sum_{N \in Z_\rho} \big\|\calM_N f - M_N f \big\|_{\ell^s} \\
		&\lesssim
		\Big(\sum_{N \in Z_\rho} N^{-\delta_s}\Big)
		\|f \|_{\ell^s},
	\end{align*}
	which is bounded. Finally, by \cite[Theorem C]{tr1},
	\[
		\big\|
		V_r\big( M_N f : N \in \NN \big)
		\big\|_{\ell^s}
		\lesssim
		\frac{r}{r-2} \|f\|_{\ell^s},
	\]
	and the theorem follows.
\end{proof}

\subsection{Variational Hilbert transform}
In this section we show Theorem \ref{thm:2} for the model dynamical system and the truncated discrete Hilbert transform
defined as
\[
	\calH_N f(x) = \sum_{p \in \pm \bfP_N} f\big(x - P(p)\big) \frac{\log (\abs{p})}{p \psi(\abs{p})}.
\]
\begin{theorem}
	For each $s > 1$ there is $C_s > 0$ such that for all $r > 2$, and $f \in \ell^s(\ZZ)$,
	\[
		\big\|
		V_r(\calH_n f : n \in \NN)
		\big\|_{\ell^s}
		\leq
		C_s \frac{r}{r-2}
		\|f \|_{\ell^s}.
	\]  
\end{theorem}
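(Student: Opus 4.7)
The plan is to parallel the proof of Theorem \ref{thm:6} by splitting $V_r \lesssim V_r^S + V_r^L$ with respect to $Z_\rho$ for a suitable $\rho \in (0,1)$ and treating the short and long variations separately.

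For short variations, let $h_n$ denote the convolution kernel of $\calH_n$. Since $\bfP_n$ differs from $\bfP_{n-1}$ by at most the single point $n$ (and similarly $-\bfP_n$ from $-\bfP_{n-1}$), the difference $h_n - h_{n-1}$ is supported on $\{\pm n\}$ with values of absolute size $\tfrac{\log n}{n \psi(n)} \ind{n \in \bfP}$. Consequently
\[
\Big\|\sum_{n = N_{k-1}+1}^{N_k} |h_n - h_{n-1}|\Big\|_{\ell^1} \lesssim \sum_{p \in \bfP \cap (N_{k-1}, N_k]} \frac{\log p}{p \psi(p)} \lesssim \log(N_k) - \log(N_{k-1}) \lesssim k^{\rho-1},
\]
where the middle estimate follows from \eqref{eq:6} combined with Mertens' theorem. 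Young's inequality applied term-by-term, together with Minkowski's inequality with exponent $u = \min\{2, s\}$, then yields $\|V_r^S(\calH_n f : n \in \NN)\|_{\ell^s} \lesssim \|f\|_{\ell^s}$ provided $\rho < (u-1)/u$, uniformly in $r > 2$.

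For long variations, I would compare $\calH_N$ with the truncated Hilbert transform along all primes,
\[
H_N f(x) = \sum_{p \in \pm\PP_N} f\big(x - P(p)\big) \frac{\log|p|}{p}.
\]
By Plancherel's theorem applied to \eqref{eq:6}, one obtains $\|\calH_N f - H_N f\|_{\ell^2} \lesssim N^{-\delta}\|f\|_{\ell^2}$ for some $\delta > 0$. A crude bound $\|\calH_N f - H_N f\|_{\ell^s} \lesssim (\log N)\|f\|_{\ell^s}$ follows from estimating the $\ell^1$-norm of each kernel, which is $O(\log N)$ by \eqref{eq:6} and Mertens' theorem. Complex interpolation between these two yields $\|\calH_N f - H_N f\|_{\ell^s} \lesssim N^{-\delta_s}\|f\|_{\ell^s}$ for every $s > 1$ and some $\delta_s > 0$. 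Since $Z_\rho$ grows as $2^{k^\rho}$, summing gives
\[
\big\|V_r(\calH_N f - H_N f : N \in Z_\rho)\big\|_{\ell^s} \leq \sum_{N \in Z_\rho} \|\calH_N f - H_N f\|_{\ell^s} \lesssim \|f\|_{\ell^s},
\]
and the variational bound $\|V_r(H_N f : N \in \NN)\|_{\ell^s} \lesssim \tfrac{r}{r-2}\|f\|_{\ell^s}$ from \cite[Theorem C]{tr1} finishes the long variation estimate.

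The principal difference from the averaging case is that the truncated Hilbert kernels are signed and their $\ell^1$-norms grow like $\log N$, so the trivial $\ell^s$-bound used in the interpolation step is no longer uniform in $N$ as it was for $\calM_N$ and $M_N$. This loss is harmless since $Z_\rho$ is sparse enough that any polynomial decay $N^{-\delta_s}$ is summable, but it is the main technical point where the argument diverges from the proof of Theorem \ref{thm:6}. I expect tracking the $\tfrac{r}{r-2}$ dependence through the decomposition, and isolating this dependence entirely in the contribution from \cite[Theorem C]{tr1}, to be the only other source of care.
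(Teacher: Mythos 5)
Your proposal is correct and follows essentially the same route as the paper's proof: short variations via the $\ell^1$-norm of kernel differences controlled by \eqref{eq:6} with $\xi=0$ and Mertens' theorem, long variations by comparing $\calH_N$ with $H_N$ through the Plancherel/interpolation argument and invoking \cite[Theorem C]{tr1}. Your closing remark about the $\log N$ growth of the kernel $\ell^1$-norms being the main deviation from the averaging case is exactly the point the paper handles in the same way (via sparsity of $Z_\rho$); the only minor imprecision is that $h_n - h_{n-1}$ is supported at $\{P(n), P(-n)\}$ rather than at $\{\pm n\}$, but this does not affect the $\ell^1$-norm computation.
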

\begin{proof}
	Let $h_n$ denote the convolution kernel corresponding to $\calH_n$. Then for each
	$x \in \bfP_{N_k} \setminus \bfP_{N_{k-1}}$,
	\[
		\sum_{n = N_{k-1}}^{N_k-1} \abs{h_{n+1}(x) - h_n(x)} \leq \frac{\log(x)}{x \psi(x)},
	\]
	otherwise the sum equals zero. Let us recall that the Mertens theorem says (see \cite[\S 55]{landau})
	\begin{equation}
		\label{eq:27}
		\sum_{p \in \PP_N} \frac{\log(p)}{p} = \log(N) - B_3 + \calO\Big(\exp\big(-\sqrt[14]{\log(N)}\big)\Big),
	\end{equation}
	where $B_3$ is the Mertens constant. Hence, by taking in \eqref{eq:6}, $\xi = 0$, we get
	\begin{align*}
		\sum_{p \in \bfP_{N_k} \setminus \bfP_{N_{k-1}}} \frac{\log(p)}{p\psi(p)}
		&=
		\sum_{p \in \PP_{N_k} \setminus \PP_{N_{k-1}}} \frac{\log(p)}{p} 
		+ \calO\big(N_{k-1}^{-\delta}\big) \\
		&=
		\log N_k - \log N_{k-1} + \calO\big(N_{k-1}^{-\delta}\big).
	\end{align*}
	Therefore, by the mean value theorem,
	\[
		\sum_{p \in \bfP_{N_k} \setminus \bfP_{N_{k-1}}} \frac{\log(p)}{p\psi(p)}
		\lesssim
		k^{-1 + \rho},
	\]
	and hence, we can estimate
	\[
		\Big\|
		\sum_{n = N_{k-1}}^{N_k-1} \abs{h_n - h_{n-1}}
		\Big\|_{\ell^1}
		\lesssim
		\sum_{p \in \bfP_{N_k} \setminus \bfP_{N_{k-1}}} \frac{\log(p)}{p\psi(p)}
		\lesssim k^{\rho-1}.
	\]
	Now, by Young's inequality, we conclude that
	\begin{align}
		\nonumber
		\Big\|
		\sum_{n = N_{k-1}}^{N_k-1} 
		\big|
		\calH_{n+1} f - \calH_n f
		\big|
		\Big\|_{\ell^s}
		&\leq
		\Big\|
		\sum_{n = N_{k-1}}^{N_k-1} \abs{h_{n+1} - h_n}
		\Big\|_{\ell^1} \cdot \|f\|_{\ell^s} \\
		\label{eq:40}
		&\lesssim
		k^{-1+\rho} \|f\|_{\ell^s}.
	\end{align}
	Taking $u = \min\{2, s\}$, by monotonicity and Minkowski's inequality, we get
	\begin{align*}
		\Big\|
        \Big(\sum_{k \geq 1} V_r\big(\calH_n f : n \in [N_{k-1}, N_k) \big)^r
        \Big)^{\frac{1}{r}}
        \Big\|_{\ell^s}
        \leq
        \Big\|
        \Big(
        \sum_{k \geq 1}
        \Big(
        \sum_{n = N_{k-1}}^{N_k-1} \big|\calH_{n+1} f - \calH_n f \big|
        \Big)^u
        \Big)^{\frac{1}{u}}
        \Big\|_{\ell^s} \\
        \leq
        \Big(\sum_{k \geq 1}
        \Big\|
        \sum_{n = N_{k-1}}^{N_k-1} \big|\calH_{n+1} f - \calH_n f \big|
        \Big\|_{\ell^s}^u
        \Big)^{\frac{1}{u}},
	\end{align*}
	which together with \eqref{eq:40}, for $0 < \rho < \frac{u-1}{u}$, entails that
	\[
		\Big\|
        \Big(\sum_{k \geq 1} V_r\big(\calH_n f : n \in [N_{k-1}, N_k) \big)^r
        \Big)^{\frac{1}{r}}
        \Big\|_{\ell^s}
		\lesssim
		\Big(\sum_{k \geq 1} k^{-(1-\rho) u} \Big)^{\frac{1}{u}} \|f\|_{\ell^s} \lesssim \|f\|_{\ell^s}.
	\]

	Let us now turn to estimating the long $r$-variations. Let $\epsilon > 0$ satisfy the hypotheses of Theorem
	\ref{thm:4}. We are going to replace the operators $\calH_N$, by
	\[
		H_N f(x) = \sum_{p \in \pm \PP_N} f\big(x - P(p)\big) \frac{\log (\abs{p})}{p}.
	\]
	To do so, let us observe that Theorem \ref{thm:4} implies that
	\[
		\sum_{p \in \bfP_N} \frac{\log(p)}{p \psi(p)}
		\lesssim
		\sum_{p \in \PP_N} \frac{\log(p)}{p} + N^{-\epsilon}
		\lesssim
		\log N,
	\]
	where the last estimate follows from \eqref{eq:27}. Hence, by Young's inequality we obtain 
	\begin{equation}
		\label{eq:31}
		\big\| \calH_N f - H_N f \big\|_{\ell^s} \lesssim 
		\bigg( \sum_{p \in \bfP_N} \frac{\log(p)}{p \psi(p)} +
        \sum_{p \in \PP_N} \frac{\log(p)}{p} \bigg) \cdot
		\|f\|_{\ell^s}
		\lesssim (\log N) \|f\|_{\ell^s}.
	\end{equation}
	For $s = 2$, by the Plancherel's theorem and Theorem \ref{thm:4},
	\begin{align}
		\nonumber
		\big\| \calH_N f - H_N f \big\|_{\ell^2}
		&\leq
		\sup_{\xi \in [0, 1]}
		\bigg|
		\sum_{p \in \pm \bfP_N} e^{2 \pi i \xi P(p)} \frac{\log(\abs{p})}{p \psi(\abs{p})}
		-
		\sum_{p \in \pm \PP_N} e^{2 \pi i \xi P(p)} \frac{\log(\abs{p})}{p}
		\bigg|
		\cdot
		\|f\|_{\ell^2} \\
		\label{eq:32}
		&\lesssim
		N^{-\epsilon} \|f\|_{\ell^2}.
	\end{align}
	Hence, by interpolation between \eqref{eq:31} and \eqref{eq:32}, we obtain
	\[
		\big\| \calH_N f - H_N f \big\|_{\ell^s}
		\lesssim
		N^{-\delta_s} \|f\|_{\ell^s},
	\]
	for some $\delta_s > 0$. Therefore,
	\begin{align*}
		\big\|V_r\big(\calH_N f - H_N f : N \in Z_\rho \big)\big\|_{\ell^s}
		&\leq
		\sum_{N \in Z_\rho} \big\|\calH_N f - H_N f \big\|_{\ell^s} \\
		&\lesssim
		\Big(\sum_{N \in Z_\rho} N^{-\delta_s}\Big)
		\|f \|_{\ell^s},
	\end{align*}
	which is bounded. Finally, the estimate
	\[
		\big\|V_r(H_N f(x) : N \in \NN) \big\|_{\ell^s}
		\leq
		C_s \frac{r}{r-2} \|f\|_{\ell^s}
	\]
	follows by \cite[Theorem C]{tr1}.
\end{proof}

\begin{bibliography}{discrete}
	\bibliographystyle{amsplain}
\end{bibliography}

\end{document}